\title [ Optimal Convergence Implies Numerical Smoothness]
{Optimal Order Convergence Implies Numerical Smoothness }
\author{So--Hsiang Chou}
\thanks{Department of Mathematics and Statistics,
Bowling Green State University, Bowling Green, OH, 43403-0221,
USA}
\date{\today}
\newtheorem{theorem}{Theorem}[section]
\newtheorem{definition}[theorem]{Definition}
\newtheorem{lemma}[theorem]{Lemma}
\newtheorem{remark}[theorem]{Remark}
\newcommand{\scp}{\mathcal P}
\newcommand{\ps}{\mathbb P}
\newcommand{\norm}{\|}
\newcommand{\bx}{\mathbf x}
\newcommand{\bqa}{\begin{eqnarray*}}
\newcommand{\eqa}{\end{eqnarray*}}
\begin{document}
\maketitle


\begin{abstract}
It is natural to expect the following loosely stated approximation principle to hold: a numerical
approximation solution should be in some sense as smooth as its target exact
solution in order to have optimal convergence. For piecewise polynomials,
 that means we have to at least maintain
 numerical smoothness in the
interiors as well as across the interfaces of cells or elements.
In this paper we give clear definitions of numerical smoothness that address the across-interface smoothness in terms of {\it scaled} jumps in derivatives \cite{SUN} and the interior numerical smoothness in terms of differences in derivative values. Furthermore, we prove rigorously that the principle can be simply stated as {\it numerical smoothness is necessary for optimal order convergence.}
It is valid on quasi-uniform meshes by triangles and quadrilaterals in two dimensions and by tetrahedrons and hexahedrons
in three dimensions. With
this validation we can justify, among other things, incorporation of this
principle in creating adaptive numerical approximation for the solution of
PDEs or ODEs, especially in designing proper smoothness indicators or detecting potential non-convergence and instability.
\end{abstract}

\noindent {\bf Key words.} {\small Adaptive algorithm, discontinuous Galerkin, numerical smoothness, optimal order convergence}

\noindent {\bf AMS subject classifications.} {\small 65M12, 65M15, 65N30}

\section{Introduction}
Consider the problem of approximating a function $u$ defined on a domain in ${\mathbb R}^n$ by a sequence of numerical
solutions $\{u_h\}$. The target function $u$ may be an exact solution of a second or higher order partial or ordinary differential equation, and the sequence may be piecewise polynomials from a discontinuous Galerkin method \cite{HR} or reconstructed polynomials $u^R$ in an intermediate phase \cite{LEVEQUE}, and even post-processed finite element solutions to achieve superconvergence \cite{LW}. Although we had discontinuous Galerkin numerical solutions in mind, the source of the problem is not important for our purpose here, as it only puts the degree of smoothness of $u$ in perspective. Now suppose that $u$ is in $W^{p+1}_s(\Omega)$ (standard notation for Sobolev spaces here, supindex for the order
 of derivative and subindex for the $L^s$ based space). It is natural to expect that the approximation solutions should be as smooth (in some sense) in order to achieve optimal convergence rate. The purpose of this paper is to give clear and rigorous results on this simple minded principle.

 Sun \cite{SUN} showed in one dimension if the mesh is uniform and the function $u$ has $p+1$
 weak derivatives in $L^s,s=1,2,\infty$, then a necessary condition can be formulated. In particular in the $s=\infty$ case,
 the jumps of the $k^{th}$ derivatives, (across a mesh point) of the approximation piecewise polynomial of degree $p$ must be less than or equal to
  ${\mathcal O} (h^{p+1-k}), 0\leq k\leq p$. This one dimensional result is perhaps not surprising, once one realizes the interpolation error behaves in a similar way: taking a derivative, one loses a power of $h$, assuming $u\in W^{p+1}_\infty$. In the appendix of this paper, the assertion is actually proved by comparing the derivatives of $u$, its continuous piecewise Lagrange interpolant $u^I$, and $u^R$ at a mesh point.  This short proof can even be carried over to higher dimensions. Unfortunately, it cannot be extended to higher dimensions when $s=1,2$ due to the restriction on continuity imposed by the Sobolev imbedding theorem (See Remark \ref{A.1} in the appendix for other reasons). Since now one starts with a function $u\in W^{p+1}_s,s=1,2$, there are always some $k$ and up for which the $k$th derivative of $u$ at a point of interest is
 not well defined. On the other hand, in hindsight an idea (Lemma \ref{crucial} below) in the much lengthier and originally unfavored proof in \cite{SUN} for one dimension can be distilled and generalized to prove the two and three dimensional versions of the same principle.

  While Sun\,{\it et\,al.} \cite{SUN2,SR} have successfully applied it to the analysis of numerical methods for one dimensional nonlinear conservation laws, it is quite
  clear that this principle has a very broad scope of applications such as safeguarding divergence or negating optimal order convergence in designing new methods, let alone in creating smoothness indicators \cite{SUN2,SR} in an adaptive algorithm, and so on. Being motivated by its application potential
  in higher dimensions, in this paper we generalize the concept of numerical smoothness of a piecewise polynomial in \cite{SUN} to higher dimensions and show that in order for the convergence of $u_h$ to $u$ to have optimal order $p$ in $W^{p+1}_s$, $u_h$ must have $W^{p+1}_s,s=1,2,\infty$ numerical smoothness, provided that the domain can be meshed by quasi-uniform subdivisions into triangles or quadrilaterals in 2-D and tetrahedrons or hexahedrons (cubes) in 3-D. We accounted for both interior and across interface numerical smoothness. In $\S$ 3, we formerly define the across-interface numerical smoothness in Definition \ref{smooth_def}, which is well motivated by the theorems in $\S$ 2 and also define interior numerical smoothness in Definition \ref{smooth_def1}. The main result
 that states optimal order convergence implies numerical smoothness is proved in Theorems \ref{smoothness} and \ref{smoothness1}. This section is
 written in such a way that the reader can go read it directly after the introduction section.

   The organization
 of rest of this paper is as follows. In $\S$ 2, we first derive basic error estimates without imposing conditions on meshes other than the shape regularity. The main theorem  is Theorem \ref{main2}, now under the quasi-uniform condition on the mesh.
 Finally, in $\S$ 4 we give a short proof of the one dimensional version of Theorem \ref{smoothness} and explain why it cannot be extended to higher dimensions.

\section{Basic Estimates for Numerical Smoothness}\label{section2}
Let $\alpha=(\alpha_1,\alpha_2,\cdots,\alpha_n), \alpha_i\geq
0,1\leq i\leq n$ be a multi-index and
$|\alpha|=\sum_i^n\alpha_i.$ Some of the theorems in this section
have their one dimensional counterparts in \cite{SUN}. We are especially inspired by
the central use of Lemma 2.2 in \cite{SUN}. The next lemma is its higher dimensional version, which will be used after
 a scaling argument back to the master element of unity size. At a certain point $x\in {\mathbb R}^n$ of interest,
e.g., a mesh nodal point, a center of a simplex (edge or face), to measure the smoothness of a mesh function $u_h$, we will be examining
 all the jumps $\llbracket\partial^\alpha u_h\rrbracket_x,|\alpha|=k$ in the partial
derivatives of order $k$ for $0\leq k\leq p$. In this perspective, we now state and prove the next lemma.  Denote by ${\mathbb P}_p$ the space of polynomials of total degree at most $p$.

\begin{lemma}\label{crucial}
Let $\Delta=(\Delta_0,\Delta_1,\cdots,\Delta_p)$, where each $\Delta_k$ is a vector of a certain length (e.g., it has as many components as the number of partial derivatives of order $k$). Let $\hat \Omega_{\pm}$
be two open set in ${\mathbb R}^n$.  Define
\[Q(\Delta)=\min_{\hat v\in \scp}\left(
\left\|\hat v +\frac
12\sum_{\alpha\in {\mathcal I}}\frac{\Delta_\alpha}{\alpha!}\xi^\alpha\right\|^2_{L^2(\hat\Omega_-)}
+ \left\|\hat v -\frac
12\sum_{\alpha\in {\mathcal I}}\frac{\Delta_\alpha}{\alpha!}\xi^\alpha\right\|^2_{L^2(\hat \Omega_+)}
\right),\]
where the minimum is taken over $\scp={\mathbb P}_p$ in $\xi$ and the index set
\begin{equation}\label{index_set}
{\mathcal I}={\mathcal I}_\ps:=\{\alpha:|\alpha|=k,\,\,0\leq k\leq p\}
\end{equation}
Then $Q(\Delta)$ is a positive definite quadratic form in $\Delta$, and
 there exists a constant $C_p>0$ such that
\begin{equation}\label{cs}
Q(\Delta)\ge C_p \|\Delta\|^2=C_p\sum_{i=0}^p\|\Delta_i\|^2,
\end{equation}
where $\|\Delta_i\|$ is the spectral norm of vector $\Delta_i$.
\end{lemma}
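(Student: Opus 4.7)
My plan is to first observe that $Q(\Delta)$ is a (non-negative) quadratic form in $\Delta$, then prove it is positive definite by a polynomial vanishing argument, and finally use finite-dimensional compactness to extract the constant $C_p$.

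For the quadratic form property, I would note that the map $(\hat v, \Delta) \mapsto \|\hat v + \tfrac12 P_\Delta\|_{L^2(\hat\Omega_-)}^2 + \|\hat v - \tfrac12 P_\Delta\|_{L^2(\hat\Omega_+)}^2$, where $P_\Delta(\xi) := \sum_{\alpha\in\mathcal I}\frac{\Delta_\alpha}{\alpha!}\xi^\alpha$, is jointly quadratic in the finite-dimensional parameters $(\hat v,\Delta)\in\mathbb P_p\times \mathbb R^N$. Minimizing out $\hat v$ amounts to an $L^2$-projection, so the optimal $\hat v^\ast$ depends linearly on $\Delta$, and substituting back leaves $Q(\Delta)$ as a quadratic form in $\Delta$ alone. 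In particular $Q(\Delta)\ge 0$.

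The main step is positive definiteness. Suppose $Q(\Delta)=0$ for some $\Delta$. Then there exists $\hat v\in\mathbb P_p$ with
\[
\hat v + \tfrac12 P_\Delta \equiv 0 \text{ on } \hat\Omega_-, \qquad \hat v - \tfrac12 P_\Delta \equiv 0 \text{ on } \hat\Omega_+ .
\]
Because $P_\Delta$ has total degree at most $p$, both expressions are polynomials in $\mathbb P_p$, and they vanish on non-empty open sets of $\mathbb R^n$; hence by the standard identity-theorem for polynomials they vanish identically on $\mathbb R^n$. Adding the two polynomial identities gives $2\hat v\equiv 0$, and then subtracting gives $P_\Delta\equiv 0$. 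Linear independence of the monomials $\{\xi^\alpha : \alpha\in\mathcal I\}$ forces every component of every $\Delta_k$ to vanish, so $\Delta=0$. This proves $Q(\Delta)>0$ whenever $\Delta\neq 0$.

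To produce the uniform constant $C_p$, I would simply invoke homogeneity and compactness: $Q$ is a continuous quadratic form on the finite-dimensional space in which $\Delta$ lives, so
\[
C_p := \min_{\|\Delta\|=1} Q(\Delta)
\]
is attained and, by the previous paragraph, is strictly positive. Homogeneity $Q(t\Delta)=t^2 Q(\Delta)$ then yields $Q(\Delta)\ge C_p\|\Delta\|^2$ for all $\Delta$. I do not expect a serious obstacle here; the only subtle point is the polynomial-vanishing step, which is why the hypothesis that $\hat\Omega_\pm$ are \emph{open} is essential. Note also that the constant $C_p$ depends on $\hat\Omega_\pm$ and $p$ but not on $\Delta$, which is exactly what later scaling arguments require.
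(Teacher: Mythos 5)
Your argument is correct and follows essentially the same route as the paper's: the minimizing $\hat v$ depends linearly on $\Delta$ (so $Q$ is a quadratic form), positive definiteness follows because a polynomial of degree $\le p$ vanishing on a non-empty open set vanishes identically (forcing $\hat v\pm\tfrac12 P_\Delta\equiv 0$ and hence $\Delta=0$), and the constant $C_p$ comes from homogeneity plus compactness of the unit sphere in the finite-dimensional parameter space. The paper states the non-degeneracy step coefficient-wise and leaves the rest implicit; you have simply filled in the same details more explicitly.
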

\begin{proof} The $n=1$ version is in \cite{SUN}.
Simply notice that the minimizer $\sum_\alpha V_\alpha\xi^\alpha$ is such that
each $V_\alpha$ is a linear combination of $\Delta_\beta$'s. Non-degeneracy
comes from the fact that $V_\alpha+\frac 12\frac {\Delta_\alpha}{\alpha!}=0$ and
$V_\alpha-\frac 12\frac {\Delta_\alpha}{\alpha!}=0$ implies $\Delta_\alpha=0$.
\end{proof}

{\it We will write $Q(\Delta)$ as
$Q(\Delta_0,\Delta_1,\cdots,\Delta_p)$ if this longer notation can
be accommodated in display.}

\begin{theorem}\label{thm2.2}
Suppose that $u\in H^{p+1}(\Omega)$, $\Omega=(a,b)$
and $u^R$ is a piecewise polynomial of degree $\leq p$ with respect to a subdivision
$\{{\mathcal T}_h\}$ of subintervals $\{I_i=(x_i,x_{i+1})\}_{i=0}^N$. Let $h=\max_i|I_i|$
and $h_{\min}=\min_i|I_i|$.
 Then there exists a positive constant $C_1$
independent of $h,u$ and $u^R$ such that
\begin{equation}
 \norm u-u^R\norm_{L^2(\Omega)}\geq C_1h^{p+1}\left(
\sqrt{\sum_{i=1}^{N}h^n_{\min}\norm \tilde
D_i\norm^2}-|u|_{H^{p+1}(\Omega)}\right),
\end{equation}
 where $n=1$ and the components of
$\tilde D_i$ are
\begin{equation}\label{jump}
\tilde
D_i^{\alpha}=J_i^{(\alpha)}/(h^{p+1}h^{-|\alpha|}_{\min}),\quad
J^{(\alpha)}_i=\llbracket\partial^\alpha u^R\rrbracket_{x_i},\quad
|\alpha|=k,\,\, 0\leq k\leq p.
\end{equation}
\end{theorem}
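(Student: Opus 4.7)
The plan is to localize around each interior mesh point $x_i$, apply Lemma~\ref{crucial} after rescaling to a reference configuration, and then assemble the local estimates by a Pythagorean sum on disjoint pieces. For each $i=1,\dots,N$ I would introduce the half-cell patches $\Omega_i^-=(x_i-h_{\min}/2,x_i)$ and $\Omega_i^+=(x_i,x_i+h_{\min}/2)$; because $h_{\min}$ is the minimum subinterval length, the collection $\{\Omega_i^\pm\}$ is pairwise disjoint inside $\Omega$, so
\[
\|u-u^R\|_{L^2(\Omega)}^2 \;\ge\; \sum_{i=1}^{N}\|u-u^R\|_{L^2(\Omega_i^-\cup\Omega_i^+)}^2.
\]
The affine change of variable $\xi=2(x-x_i)/h_{\min}$ identifies $\Omega_i^\pm$ with reference half-intervals $\hat\Omega_\pm$ of unit size, after which Lemma~\ref{crucial} is available.

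On the reference patch, $\hat u^R$ coincides with polynomials $q_\pm\in\mathbb{P}_p$ on $\hat\Omega_\pm$, so setting $w=(q_++q_-)/2$ and $S(\xi)=\sum_{|\alpha|\le p}\tfrac{\Delta_{i,\alpha}}{\alpha!}\xi^\alpha$ with $\Delta_{i,\alpha}=(h_{\min}/2)^{|\alpha|}J_i^{(\alpha)}=2^{-|\alpha|}h^{p+1}\tilde D_i^\alpha$, one has $q_\pm=w\pm S/2$. For any polynomial $\hat p_i\in\mathbb{P}_p$, the choice $\hat v_i:=\hat p_i-w\in\mathbb{P}_p$ gives
\[
\hat u-\hat u^R \;=\; (\hat u-\hat p_i) + \bigl(\hat v_i \pm \tfrac12 S\bigr) \quad\text{on } \hat\Omega_\mp,
\]
which matches the template in Lemma~\ref{crucial}. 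I would take $\hat p_i$ to be the $L^2(\hat\Omega_-\cup\hat\Omega_+)$-projection of $\hat u$ onto $\mathbb{P}_p$, so that the Bramble--Hilbert lemma provides $\|\hat u-\hat p_i\|_{L^2}\le C\,|\hat u|_{H^{p+1}}$.

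The final step is a reverse triangle inequality in the product Hilbert space $\bigoplus_i L^2(\Omega_i^-\cup\Omega_i^+)$: with $F=u-u^R$, $G$ the piece $\hat v_i\pm S/2$ (scaled back to $x$), and $H$ the projection remainder, this gives $\sqrt{\sum_i\|F_i\|^2}\ge\sqrt{\sum_i\|G_i\|^2}-\sqrt{\sum_i\|H_i\|^2}$. Lemma~\ref{crucial} supplies $\|G_i\|_{L^2(\hat\Omega_-\cup\hat\Omega_+)}^2\ge C_p\|\Delta_i\|^2\ge C\,h^{2(p+1)}\|\tilde D_i\|^2$; undoing the Jacobian factor $h_{\min}/2$ and using $h_{\min}\le h$ on the remainder side yields
\[
\sum_i\|G_i\|_{L^2(\Omega_i)}^2\;\ge\; C\,h^{2(p+1)}\sum_i h_{\min}\|\tilde D_i\|^2,\qquad \sum_i\|H_i\|_{L^2(\Omega_i)}^2\;\le\; C\,h^{2(p+1)}|u|^2_{H^{p+1}(\Omega)},
\]
the second inequality using disjointness of the $\Omega_i$'s inside $\Omega$. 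Taking square roots then gives the claim with $n=1$. The principal obstacle is the careful bookkeeping of the three length scales --- $h_{\min}/2$ driving the change of variable, $h^{p+1}$ entering the normalization of $\tilde D_i$, and $h_{\min}^{-|\alpha|}$ also in that normalization --- together with matching the sign convention of $q_\pm=w\pm S/2$ to the $\pm$ in Lemma~\ref{crucial}. Once those are aligned the estimate is a straightforward cascade of the lemma, Bramble--Hilbert, and the triangle inequality.
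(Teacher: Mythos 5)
Your argument is correct and rests on the same two pillars as the paper's proof: the average/jump splitting $q_\pm=w\pm S/2$ followed by rescaling to a fixed reference patch and an appeal to Lemma \ref{crucial}, with the $u$-dependent part absorbed by an optimal-order approximation of $u$ that is a \emph{single} polynomial straddling $x_i$ (this is the point that makes $\min_{v\in\mathbb{P}_p}\|v-u^R\|_{L^2(\Omega_i^*)}$ a legitimate lower bound). The one structural difference is how that approximation is organized. The paper builds a global dual covolume mesh by inserting the midpoints $x_{i+1/2}$, takes one global interpolant $u^I\in\mathbb{P}_h^*$ satisfying \eqref{covolume}, applies the triangle inequality once on all of $\Omega$, and only then restricts to the patches $\Omega_i^*$ (with $\delta=\tfrac14 h_{\min}$ rather than your $\tfrac12 h_{\min}$; both work). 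You instead restrict first to the disjoint patches, take a purely local $L^2$ projection $\hat p_i$ on each, and reassemble with Minkowski's inequality in $\bigoplus_i L^2(\Omega_i^-\cup\Omega_i^+)$, using disjointness to sum the local Bramble--Hilbert bounds into $|u|^2_{H^{p+1}(\Omega)}$. The two routes are logically equivalent here and deliver identical terms, and your scale bookkeeping checks out: $\Delta_{i,\alpha}=2^{-|\alpha|}h^{p+1}\tilde D_i^\alpha$ gives $\|\Delta_i\|^2\ge 4^{-p}h^{2p+2}\|\tilde D_i\|^2$, and the Jacobian restores the factor $h_{\min}^n$ with $n=1$. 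What the paper's global dual-mesh formulation buys is not visible in 1-D but matters for the sequel: the covolume construction is the device that is generalized verbatim to triangles, quadrilaterals, tetrahedra and hexahedra in Theorems \ref{thm2.3} and \ref{thm3D}, where the only nontrivial work is choosing a uniform radius $\delta\sim h_{\min}$; your local-projection variant would generalize as well, since those proofs also use the interpolant only on the small disks or balls $\Omega_i^*$.
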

\begin{remark} In the above statement, we used higher dimension notation for 1-D case so that its extension to
higher dimensions is more clear and straightforward. We will do the same in the proof below.
\end{remark}

\begin{proof}  Let ${\mathcal T}_h$ be a partition $x_0=a<x_1<x_2<\cdots<x_{N}<b=x_{N+1}$ on $(a,b)$. Define a new partition
by adding the midpoint $x_{i+1/2}$, center of each element $(x_{i},x_{i+1})$, to the old partition. We call $(x_{i-1/2},x_i)\cup (x_i,x_{i+1/2})\cup \{x_i\}$ the covolume associated with $x_i$. The closure of all covolumes associated with interior nodes, together with the two boundary covolumes $(x_0,x_{1/2})$
and $(x_{N+1/2},x_{N+1})$ form a dual mesh $\{{\mathcal T_h^*\}}$ to ${\mathcal T}_h$ , which we call dual covolume mesh.
Let $\ps^*_h$ be the space of piecewise polynomials of degree $\le p$ with respect to this dual mesh. Since $u\in H^{p+1}(\Omega)$, there exists
a $u^I\in \ps^*_h$ such that
\begin{equation}\label{covolume}
   \norm u-u^I\norm_{L^2(\Omega)}\le C_2h^{p+1}|u|_{H^{p+1}(\Omega)}.
\end{equation}
On the other hand, by the triangle inequality
\begin{eqnarray*}
 \norm u-u^R\norm_{L^2(\Omega)} \geq\norm u^I-u^R\norm_{L^2(\Omega)}- \norm u-u^I\norm_{L^2(\Omega)}\\
 \geq \sqrt{\sum_{i=1}^{N} \norm u^I-u^R\norm^2_{L^2(\Omega_i^*)}}-C_2h^{p+1}|u|_{
 H^{p+1}(\Omega)},
\end{eqnarray*}
where $\Omega_i^*$ is a smaller subset of the covolume associated with $x_i$. Now let's choose $\Omega^*_i$.
At each $x_i, 1\leq i\leq N$, we take a 1-D ball $\Omega^*_i=\{|x-x_i|\leq \delta\}$ so small so that it is in $(x_{i-1},x_{i+1})$. It is essential for the later use of Lemma \ref{crucial} that $\delta$ works for all $i$. We take $\delta=\frac 14 h_{\min}$.

 Let $\{q\}$ denote the average
of $q^+$ and $q^-$ and let $\llbracket q \rrbracket=q^+-q^-$ denote the
jump. Then it is trivial that
\begin{equation}\label{ja}
q^+-\{q\}=\frac 12\llbracket q \rrbracket\quad \mbox{  and  } q^--\{q\}=-\frac 12\llbracket q \rrbracket.
\end{equation}
Applying this with $q=q^k_i=\frac{d^ku^R}{dx^k}(x_i)$, the
discontinuous $k$th derivative of $u^R$ at $x_i$, and letting
$w(x)=\sum_{k=0}^p\frac{\{q_i^k\}}{k!}(x-x_i)^k$, we have with
$J_i^{(k)}:=\llbracket\frac{d^ku^R}{dx^k}\rrbracket_{x_i}$ that
\[ u^R-w=\frac 12\sum_{k=0}^p\frac{J_i^{(k)}}{k!}(x-x_i)^k,\quad \forall x\in\Omega_{i,+}^*:=(x_i,x_i+\delta)\]
and
\[ u^R-w=-\frac 12\sum_{k=0}^p \frac{J_i^{(k)}}{k!}(x-x_i)^k, \quad \forall  x\in\Omega_{i,-}^*:=(x_i-\delta,x_i).\]
Now using the change of variable $\xi=(x-x_i)/h_{\min}$ below, we have
\begin{align}
\norm\,& u^I-u^R\norm^2_{ L^2( \Omega_i^*) }\geq
 \min_{v\in \ps_p
}\norm v-u^R\norm^2_{ L^2( \Omega_i^*) }\label{sub}\\
&=\min_{v\in \ps_p}\norm v-(u^R-w)||^2_{ L^2( \Omega_i^*)}\notag\\
 &=\min_{v\in \ps_p}\left( \left\| v +\frac
12\sum_{k=0}^p\frac{J_i^{(k)}}{k!}(x-x_i)^k\right\|^2_{L^2(\Omega_{i,-}^*)} +
\left\| v -\frac
12\sum_{k=0}^p\frac{J_i^{(k)}}{k!}(x-x_i)^k\right\|^2_{L^2(\Omega_{i,+}^*)}
\right)\notag\\
&=h^n_{\min}\min_{\hat v\in \hat \ps_p}\left( \left\|\hat v +\frac
12\sum_{k=0}^p\frac{J_i^{(k)}}{k!}(\xi h_{\min})^k\right\|^2_{L^2(\hat\Omega_-)} +
\left\|\hat v -\frac
12\sum_{k=0}^p\frac{J_i^{(k)}}{k!}(\xi h_{\min})^k\right\|^2_{L^2(\hat\Omega_+)}
\right),\notag\\
&\qquad \qquad \qquad \mbox{ where  } \hat \Omega_-=(-\frac 1 4,0),\, \hat\Omega_+=(0,\frac 14)\notag\\
&=(h^n_{\min} h^{2p+2})\,
\min_{\hat v\in \hat\ps_p}\left( \left\|\hat v +\frac
12\sum_{k=0}^p\frac{\tilde D_i^{(k)}}{k!}\xi^k\right\|^2_{L^2(\hat\Omega_-)} + \left\|\hat v -\frac 12\sum_{k=0}^p\frac{\tilde D_i^{(k)}}{k!}\xi^k\right\|^2_{L^2(\hat\Omega_+)}
\right),\label{extract}      \\
&\qquad (n=1)\notag\\
&=h^n_{\min} h^{2p+2}Q(\tilde D_i^0,\tilde D_i^1,\cdots, \tilde
D_i^p).\notag
\end{align}

(Note that the minimization--range's change to ${\mathbb P}_p$ in (\ref{sub}) was possible due to the fact that $u^I\in {\mathbb P}^*_h$ is a single piece of polynomial on the covolume.)

Now invoking (\ref{cs}) on
\begin{eqnarray}\label{2.4}
\norm u^I-u^R\norm^2_{ L^2( \Omega_i^*) }\geq h^n_{\min} h^{2p+2}Q(\tilde D_i^0,\tilde D_i^1,\cdots, \tilde D_i^p).
\end{eqnarray}
completes the proof.
\end{proof}
 Next we move to the 2-D case. For a given $\{\mathcal T_h\}$, we will
use a dual mesh $\{{\mathcal T_h}^*\}$ consisting of covolumes as in Chou and Vassilevski \cite{CV}. Covolumes are obtained by adding a point to an old element in $\{\mathcal T_h\}$
and connect it with the vertices of the element. The covolumes around an edge or associated with the midpoint of the edge is shown in Figure  \ref{fig1}
 (see diamond $STVU$ associated with $x_i$). Note that a covolume is obtained by connecting vertices with an added point, which could be a circumcenter or a barycenter for triangular grids and intersection of diagonals for quadrilateral grids.
We can now summarize the most important ingredients of the 1-D proof.
\begin{itemize}
\item[(i)] {\it Ensure that the optimal order estimate (\ref{covolume}) in the $L^2$ norm holds on the dual covolume mesh.}

In contrast, unlike in 1-D the dual mesh shape condition comes into play as well. In higher dimensions,
the construction of the dual covolume mesh in a symmetric and smooth way causes the shape regularity of the primary mesh to be inherited. Furthermore the dual mesh of a triangular or quadrilateral mesh in the covolume construction is almost a quadrilateral mesh (boundary covolumes are triangles), but we will not need the boundary covolumes in our analysis below. In 3-D a covolume is the union of two tetrahedrons when the primary mesh is tetrahedral. Here the approximation
order is usually achieved by the existence of a good local $L^2$ projection-type interpolation operator. See Girault and Raviart \cite[pp. 101-109]{GR} for such operators.
    \item[(ii)] {\it Ensure the choice of radius $\delta$ is such that $\delta/h_{min}$ is a constant, independent of $h$.}

    This scales $\Omega_i^*$ to unit size so that Lemma \ref{crucial} can be used to extract a lower bound with the constant independent of $h$. Of course in this step
        the $\Omega_i^*$s are automatically non-overlapping due to the covolume construction.
    \item[ (iii)] {\it The order of approximation on the dual mesh limits the extracting power.}

    Note that what power of $h$ in (\ref{extract}) to extract is determined by how well the approximation on the dual mesh can be done.
    The optimal case is $h^{p+1}$.
\end{itemize}
 With this in mind, we now prove the corresponding theorem for triangular and quadrilateral meshes. We will denote by
${\mathbb P}_p^h$  for the piecewise polynomial space associated with ${\mathbb P}_p$. The regularity of a family of quadrilateral
subdivisions is defined as follows \cite[p. 104]{GR}. Let $Q$ be a quadrilateral with four vertices $\bx_i$ and
denote by $S_i$ the subtriangle of $Q$ with vertices
$\bx_{i-1},\bx_i$ and $\bx_{i+1}$ ( $\bx_0=\bx_4$). Let $h_Q$ be
the diameter of $Q$ and $\rho_Q= 2 \min_{1\le i\le 4}\{\mbox{
diameter of circle inscribed in } S_i\}$.
A family of quadrilateral partitions
 $\{ {\mathcal Q}_h \}$ is said to be regular if there exists a positive constant $\sigma$, independent of
$h$, such that
\begin{equation}\label{regular}
 \frac{h_Q}{\rho_Q}\le \sigma, \,\quad \forall Q\in
{\mathcal Q}_h, \forall {\mathcal Q}_h\in {\mathcal Q}.
 \end{equation}
 There are equivalent definitions \cite{CH}.


\begin{figure}[!htb]
\centering
\includegraphics[scale=0.55]{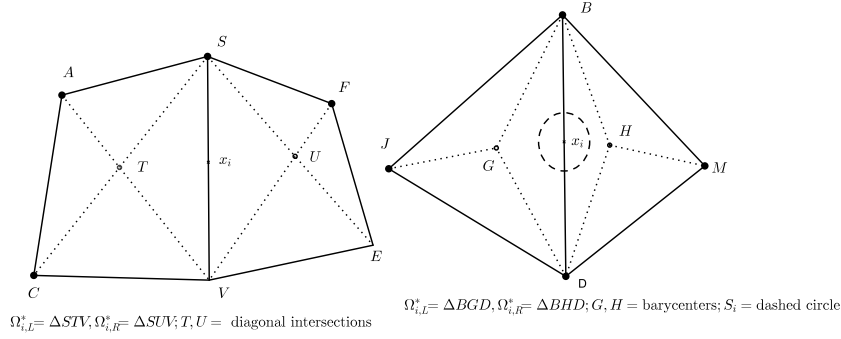}
\caption{Quadrilateral mesh with its covolumes ($STVU$) and Triangular mesh with its covolumes $BGDH$ and selected disk}
\hspace{-2cm}
\label{fig1}
\end{figure}


\begin{theorem}\label{thm2.3}
Let $u\in H^{p+1}(\Omega)$, $\Omega\subset {\mathbb R}^2$. In addition, let the following assumption hold.
\begin{itemize}
\item[H1.]  $u^R\in {\mathbb P}_p^h $ on a family of regular subdivisions $\{{\mathcal T}_h\}$ by triangles or by quadrilaterals,
 i.e., $u^R\in {\mathbb P}_p$ on each element.
 \end{itemize}
 Then there exists a positive constant $C_1$
independent of $h,u$ and $u^R$ such that
\[ \norm u-u^R\norm_{L^2(\Omega)}\geq C_1h^{p+1}\left(
\sqrt{\sum_{i=1}^{N^\circ_e}h^n_{\min}\norm \tilde
D_i\norm^2}-|u|_{H^{p+1}{(\Omega)}}\right), \] where $n=2$ and the
components of $\tilde D_i$ are
\begin{equation}\label{jump2}
\tilde
D_i^{\alpha}=\frac{J_i^{(\alpha)}}{h^{p+1}h^{-|\alpha|}_{\min}},\quad
J^{(\alpha)}_i=\llbracket\partial^\alpha u^R\rrbracket_{x_i},\,|\alpha|=k,\,\, 0\leq k\leq p.
\end{equation}
Here $N^\circ_e$ is the number of interior edges and $h_{\min}$ is
the least edge length.
\end{theorem}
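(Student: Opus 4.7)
The plan is to run the two-dimensional analogue of the argument used for Theorem~\ref{thm2.2}, following the three ingredients (i)--(iii) already summarized in the excerpt. First I build the dual covolume mesh $\{\mathcal{T}_h^*\}$ as in Chou and Vassilevski~\cite{CV}: in each element $K$ of $\mathcal{T}_h$ I pick an interior point (circumcenter or barycenter for a triangle, the intersection of diagonals for a quadrilateral) and connect it to the vertices of $K$; the two resulting sub-cells of adjacent elements sharing an interior edge $e_i$ glue together into a diamond-shaped covolume around $e_i$ (like $STVU$ in Figure~\ref{fig1}). Shape regularity of $\{\mathcal{T}_h\}$ is inherited by $\{\mathcal{T}_h^*\}$, so a Girault-Raviart-type local $L^2$ projection~\cite[pp.~101--109]{GR} produces $u^I \in \mathbb{P}_p^{h^*}$ with the optimal-order estimate
\[
\|u - u^I\|_{L^2(\Omega)} \le C_2\, h^{p+1}\,|u|_{H^{p+1}(\Omega)},
\]
which secures ingredient (i).

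Next I localize with the triangle inequality: $\|u-u^R\|_{L^2(\Omega)} \ge \|u^I-u^R\|_{L^2(\Omega)} - C_2 h^{p+1}|u|_{H^{p+1}(\Omega)}$. For each interior edge $e_i$, with midpoint $x_i$, I take the open disk $\Omega_i^* = \{x : |x-x_i| < \delta\}$ with $\delta = \tfrac14 h_{\min}$. This choice realizes ingredient (ii), since $\delta/h_{\min}$ is a fixed constant independent of $h$ and $i$; moreover $\delta$ is small enough that each $\Omega_i^*$ sits inside its surrounding covolume, the edge $e_i$ slices it cleanly into two half-disks $\Omega_{i,\pm}^*$, and the whole collection $\{\Omega_i^*\}_{i=1}^{N^\circ_e}$ is pairwise disjoint. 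Summing local contributions,
\[
\|u^I - u^R\|_{L^2(\Omega)}^2 \;\ge\; \sum_{i=1}^{N^\circ_e} \|u^I - u^R\|_{L^2(\Omega_i^*)}^2.
\]

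On each $\Omega_i^*$, the function $u^I$ is a single polynomial of total degree $\le p$, so as in~\eqref{sub} I enlarge the minimization over all of $\mathbb{P}_p$. Applying the average/jump split~\eqref{ja} to every partial derivative $\partial^\alpha u^R(x_i)$ with $|\alpha|\le p$, the Taylor average $w = \sum_{|\alpha|\le p} \{\partial^\alpha u^R\}_{x_i}(x-x_i)^\alpha/\alpha!$ can be absorbed into the polynomial $v$ being minimized, leaving on $\Omega^*_{i,\pm}$ only the jump part $\pm \tfrac12\sum \frac{J_i^{(\alpha)}}{\alpha!}(x-x_i)^\alpha$. The change of variables $\xi = (x-x_i)/h_{\min}$ then maps every $\Omega_i^*$ to the same reference disk of radius $1/4$ split by a straight diameter into $\hat\Omega_\pm$, contributes a Jacobian $h_{\min}^n$ with $n=2$, and pulling the factor $h^{p+1}$ out of the polynomial coefficients produces exactly the scaled jumps $\tilde D_i^\alpha = J_i^{(\alpha)} h_{\min}^{|\alpha|}/h^{p+1}$ of~\eqref{jump2}. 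This delivers ingredient (iii),
\[
\|u^I - u^R\|_{L^2(\Omega_i^*)}^2 \;\ge\; h_{\min}^n\, h^{2p+2}\, Q(\tilde D_i),
\]
and the coercivity bound~\eqref{cs} of Lemma~\ref{crucial} together with the triangle inequality gives the stated estimate after a square-root.

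The main obstacle is Step~1: establishing the optimal $L^2$ approximation on the dual mesh. The covolumes are neither triangles nor standard quadrilaterals (they can be nonconvex in the mixed triangle/quadrilateral setting), so one must confirm both that the regularity of $\{\mathcal{T}_h^*\}$ is truly inherited from $\{\mathcal{T}_h\}$ and that an $L^2$-bounded polynomial projection of order $p+1$ is available on such cells. Once this is in hand, the rest follows the calculational template of Theorem~\ref{thm2.2} essentially verbatim, with the only bookkeeping changes being $n=2$, a disk (rather than an interval) as $\Omega_i^*$, and the summation index running over interior edges instead of interior nodes.
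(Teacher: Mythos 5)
Your overall plan is the same as the paper's (dual covolume mesh, local $L^2$ projection for (\ref{covolume}), triangle inequality, localization to disks around edge midpoints, scaling to the reference configuration, Lemma \ref{crucial}), but there is one genuine gap, and it is not the one you flagged. You carry the radius $\delta=\frac{1}{4}h_{\min}$ over verbatim from the one-dimensional proof and simply assert that the disk $\Omega_i^*$ then sits inside the covolume surrounding $e_i$ and that the disks are pairwise disjoint. In two dimensions this is false in general: the distance from the edge midpoint $x_i$ to the covolume boundary (say to the segment $\overline{BH}$ joining a vertex of $e_i$ to the barycenter of an adjacent element) is $|\overline{Bx_i}|\sin\angle\,x_iBH$, so it degrades with the \emph{angles} of the elements and not merely with the edge lengths. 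Already for an equilateral triangle of side $h_{\min}$ this distance equals exactly $\frac{1}{4}h_{\min}$, so any mesh containing a triangle with a smaller angle at a vertex of $e_i$ forces the disk of radius $\frac{1}{4}h_{\min}$ to protrude outside the covolume. Once that happens two essential steps break: $u^I$ is no longer a single polynomial on $\Omega_i^*$, so the enlargement of the minimization to all of ${\mathbb P}_p$ as in (\ref{sub}) is not justified, and the disks may overlap, so $\sum_i\|u^I-u^R\|^2_{L^2(\Omega_i^*)}$ is no longer dominated by $\|u^I-u^R\|^2_{L^2(\Omega)}$.

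The paper closes this gap by using hypothesis H1 in an essential way: shape regularity is equivalent to a minimal angle condition $\theta\ge\theta_0$, which yields the lower bound $|\overline{x_iF}|\ge\frac{1}{2}h_{\min}\sin\frac{\theta_0}{2}$ and hence the admissible common radius $\delta=\frac{1}{4}h_{\min}\sin\frac{\theta_0}{2}$; for quadrilateral meshes it additionally invokes Chou--He to deduce the minimal angle condition from the Girault--Raviart regularity (\ref{regular}), and observes that the relevant covolume angle (e.g.\ $\angle VSU$) cannot exceed $90$ degrees so the same $\delta$ works. Your proposal never uses the regularity hypothesis at all, which is a sign the argument is incomplete. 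By contrast, the obstacle you did single out --- the optimal-order $L^2$ estimate on the covolume mesh --- is handled in the paper simply by taking $u^I$ to be the \emph{local} $L^2$ projection on each covolume, for which the estimate in \cite[p.~108]{GR} holds without extra regularity assumptions on the dual cells.
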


\begin{proof} We will proceed as in 1-D case, pointing out the difference along the way.\\
{\it Case 1: Triangular mesh.}\\
 For each ${\mathcal T}_h$ we define
a dual mesh ${\mathcal T}^*_h$ formed as follows. With reference of Figure \ref{fig1}, in each element we connect the vertices (e.g., $B,M,D$) with a newly added point, which in this case
is the barycenter (e.g., $H$), to create three new triangles.
 The two half-covolumes (e.g. $BGD$, $BHD$) form a single covolume $BGDH$ for the common edge $BD$.  All covolumes form the dual mesh ${\mathcal T}^*_h$.
 We can find a $u^I\in {\mathbb P}^*_h$ so that (\ref{covolume}) holds under no regularity conditions on the dual mesh by quadrilaterals. The $u^I$ is the {\em local} $L^2$ projection and estimate (\ref{covolume}) can be found in \cite[p. 108]{GR}.

   Let $x_i$ be the midpoint of an interior edge $e_i$ common to two half-covolumes $\Omega_{i,L}^*,\Omega_{i,R}^*$. In Figure \ref{fig1}
   , $\Omega_{i,L}^*=\triangle BGD,\Omega_{i,R}^*=\triangle BHD$. Let us take $\Omega_i^*=S_i$ to be an open disk with center $x_i$ and a radius $\delta$ small enough so that $S_i$ is fully
 contained in the interior of $\bar \Omega_{i,L}^*\cup\bar \Omega_{i,R}^*$.
 The radius however has to work for all midpoints $x_i$ on interior edges. It is well known that shape regularity is equivalent to
 the minimal angle condition, and consequently there is a constant $\theta_0$ such that
  all interior angles $\theta\ge \theta_0$ for all $h$. Without loss of generality,
   suppose the distance from $x_i$ to the boundary of $\bar \Omega^*_{i,L}\cup\bar \Omega_{i_R}^*$ is
    attained by $|\overline{x_iF}|$, where the foot $F$ is on $\overline{BH}$. Then
 \begin{eqnarray*}
 |\overline{x_iF}|&=|\overline{Bx_i}|\sin \frac 1 2\angle MBD\geq
 |\overline{Bx_i}|\sin\frac{\theta_0}{2}\geq \frac 1 2h_{\min}\sin\frac{\theta_0}{2},
    \end{eqnarray*}
    where we have used the fact that the sine function is increasing on $[0,\frac \pi 2]$ and that $BH$ is a bi-angle line.
Thus it suffices to take $\delta=\frac 14 h_{\min}\sin\frac{\theta_0}{2}$ as the common radius.  The rest of proof is just like 1-D case.
For example, now
\begin{equation}\label{sam1} u^R-w=\frac 12\sum_{k=0}^p\sum_{|\alpha|=k}\frac{J_i^{(\alpha)}}{\alpha!}(x-x_i)^\alpha,\quad \forall x\in\Omega_{i,+}^*,
\end{equation}
and
\begin{equation}\label{sam2}
 u^R-w=-\frac 12\sum_{k=0}^p \sum_{|\alpha|=k}\frac{J_i^{(\alpha)}}{\alpha!}(x-x_i)^\alpha,\quad \forall x\in\Omega_{i,-}^*.
 \end{equation}
 Furthermore,  from the validation of
 \begin{align*}
\norm\, u^I-u^R\norm^2_{ L^2( \Omega_i^*) }&\geq
 \min_{v\in \ps_p
}\norm v-u^R\norm^2_{ L^2( \Omega_i^*) }\notag\\
&=\min_{v\in \ps_p}\norm v-(u^R-w)||^2_{ L^2( \Omega_i^*)}\notag
\end{align*}
and  use of $\xi=(x-x_i)/h_{\min}$  and Lemma \ref{crucial}, we can derive as in the corresponding 1-D case that
\begin{align*}
\norm\, u^I-u^R\norm^2_{ L^2( \Omega_i^*) }\geq h^n_{\min} h^{2p+2}Q(\tilde D_i^0,\tilde D_i^1,\cdots, \tilde
D_i^p),\quad n=2.
\end{align*}
{\it Case 2: Quadrilateral mesh under assumption $H1$.} \\
First, optimal order estimate (\ref{covolume}) holds with the local $L^2$ projection as before.
 Equations (\ref{sam1})-(\ref{sam2}) are valid as well. So the
only concern is the choice of $\delta$. Since the local geometry in the left figure of Figure  \ref{fig1} is the
same as in the triangular case, we would still need the minimal
angle condition. However, it is shown in Theorem 4.1 of  Chou and He \cite{CH},
regularity of the quadrilateral meshes defined in \cite{GR} implies the minimal angle condition:
all interior angles of quadrilaterals and the interior angles of the subtriangles $S_i$ in (\ref{regular}) are bounded below, although
the converse is not true. On the other hand, suppose the shortest
distance from $x_i$ to the covolume edges is attained by $F$ on
$\overline{SU}$. It should be clear that $\angle VSU$ can never
exceed $90$ degrees as well. Thus, we can still take the same
$\delta$.

 This completes the proof.
\end{proof}

\begin{figure}[!htb]
\centering
\includegraphics[scale=0.55]{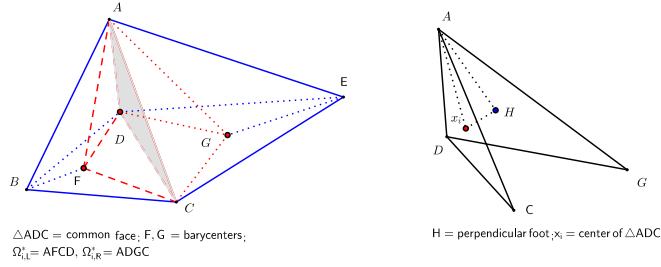}
\caption{Tetrahedral mesh with a typical covolume (tetrahedron $ADCG$ union tetrahedron $ADCF$) and local configuration around a center point in common face}
\label{fig2}
\end{figure}

We now prove the 3-D version of the previous theorem. In 3-D,
regularity of tetrahedral subdivision is still defined in terms of
the uniform boundedness of $h_K/\rho_K$, the ratio  of the maximum
diameter $h_K$ to $\rho_K$, the diameter of the inscribed sphere. It is shown
in Brandts {\it et\, al.} \cite{BKK} that this condition is equivalent to the minimal angle
condition, but now two types of angles are involved, angles between edges and between faces. We state this
equivalence in the next lemma.
\begin{lemma}\label{min}
Let $\{{\mathcal T}_h\}_{h>0}$ be a family of tetrahedral subdivisions for a domain in ${\mathbb R}^3$. Then $\{{\mathcal T}_h\}_{h>0}$ being regular is
equivalent to the minimal angle condition:
 there exists a constant $\beta_0>0$ such that
for any partition ${\mathcal T}_h$, any tetrahedron $T\in {\mathcal T}_h$, and any dihedral angle (angle between faces) or
solid angle (angle between edges) $\beta$ of $T$, we have
\begin{equation}
\beta\geq \beta_0.
\end{equation}
\end{lemma}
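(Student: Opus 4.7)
The plan is to follow the argument of Brandts, Korotov and K\v{r}\'i\v{z}ek \cite{BKK} and establish the two directions separately. Both directions are most naturally handled by a compactness/scaling argument: after rescaling each $T$ so that $h_T=1$, the set of admissible tetrahedra lies in a compact subset of $\mathbb R^{12}$ (the space of vertex tuples modulo rigid motions), and both $\rho_T/h_T$ and the collection of all dihedral and solid angles become continuous functions that are bounded below by a positive constant if and only if they stay away from a degenerate boundary.

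For the implication \emph{regularity implies the minimum angle condition}, I would argue by contradiction. Suppose there is a sequence $T_k$ with $h_{T_k}/\rho_{T_k}\le \sigma$ but some dihedral angle $\gamma_k$ or solid angle $\omega_k$ tending to zero. After rescaling so that $h_{T_k}=1$ and passing to a subsequence, the vertices converge to a limit configuration in which some angle vanishes. A vanishing dihedral angle forces the cross-section transverse to the corresponding edge to collapse into a thin wedge, so the inscribed ball must have radius shrinking like $\rho\lesssim \sin(\gamma_k/2)\to 0$; a vanishing solid angle pinches the three edges at a vertex into a near-planar pencil, and the cone at the vertex has the same shrinking-ball obstruction. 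Either way $\rho_{T_k}\to 0$ while $h_{T_k}=1$, contradicting $h_{T_k}/\rho_{T_k}\le\sigma$.

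For the converse \emph{minimum angle condition implies regularity}, the cleanest route is through the volume/surface formula for the inradius, $r_T = 3V_T/(A_1+A_2+A_3+A_4)$, where $\rho_T=2r_T$. Pick a vertex $v$ with incident edges $e_1,e_2,e_3$ and recall $V_T = \tfrac16 |e_1||e_2||e_3|\sin\theta_{12}\sin\phi$, where $\theta_{12}$ is the planar angle between $e_1$ and $e_2$ at $v$ and $\phi$ is the angle $e_3$ makes with the plane of $e_1,e_2$; both of these are determined by the solid angle at $v$ and the dihedral angles along edges through $v$, hence are bounded below by a positive function of $\beta_0$. Choosing $v$ so that one of the $e_i$ realizes $h_T$ and using the trivial bounds $|e_j|\ge c(\beta_0)\,h_T$ (which themselves follow from the angle condition: an arbitrarily short edge forces an arbitrarily small angle at one of its endpoints), one obtains $V_T \ge c_1(\beta_0)\,h_T^{3}$. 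Combined with the face-area bound $A_i\le \tfrac12 h_T^{2}$, this yields $\rho_T\ge c_2(\beta_0)\,h_T$, which is regularity.

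The main obstacle is the forward direction: verifying that a small solid angle at a vertex indeed forces $\rho_T/h_T$ to shrink, because the geometric picture of a shrinking inscribed ball is less transparent than in the dihedral-angle case. Concretely, one must rule out the possibility that a tetrahedron is ``fat'' overall but has a thin spike at one vertex; this is handled in \cite{BKK} by examining the cone of the tetrahedron at that vertex and noting that the ball inscribed in the cone (and hence in $T$) has radius proportional to the half-aperture of the narrowest planar section of the cone, which is controlled by the solid angle.
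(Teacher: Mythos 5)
The paper does not actually prove this lemma: it is quoted verbatim from Brandts, Korotov and K\v{r}\'i\v{z}ek \cite{BKK} and used as a black box, so there is no in-paper argument to compare yours against. Judged on its own terms, your reconstruction follows the right strategy, and the compactness framing you open with is in fact sufficient by itself: after normalizing $h_T=1$ and modding out rigid motions, both $\rho_T/h_T$ and the minimum over all dihedral and edge angles are continuous on a compact configuration space and vanish exactly on the degenerate tetrahedra, so each is bounded below by a positive constant on any set where the other is. Your quantitative version of the forward direction is also correct: a ball contained in a wedge of opening $\gamma$ whose center lies within distance $h_T$ of the edge has radius at most $h_T\sin(\gamma/2)$, and the cone-at-a-vertex argument handles a collapsing edge angle, since the spherical cross-section of that cone becomes thin.

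The one place where you assert more than you prove is in the converse, in the claim that $\sin\phi$ (the angle between $e_3$ and the plane of $e_1,e_2$) is ``bounded below by a positive function of $\beta_0$.'' The relevant identity is $\sin\phi=\sin\angle(e_1,e_3)\,\sin D_{e_1}$, with $D_{e_1}$ the dihedral angle along $e_1$; the hypothesis bounds $D_{e_1}$ away from $0$ but says nothing about $\pi$, and $\sin D_{e_1}\to 0$ as $D_{e_1}\to\pi$. One must therefore show separately that the lower bounds on all the remaining angles also keep every dihedral angle away from $\pi$: if some $D_e\to\pi$ while the face angles stay in $[\beta_0,\pi-2\beta_0]$, the spherical law of cosines forces the spherical triangle at that vertex to degenerate, hence the tetrahedron to flatten, and a flat tetrahedron with nondegenerate faces necessarily has a dihedral angle tending to $0$, contradicting the hypothesis. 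That step is genuinely nontrivial (it is where the sliver tetrahedron is excluded), so either supply it explicitly or route the whole converse through the compactness argument you already set up. The remaining ingredients --- the inradius formula $r_T=3V_T/(A_1+A_2+A_3+A_4)$, the edge-length lower bound via the angle a short edge subtends at a far vertex, and $A_i\le\tfrac12 h_T^2$ --- are all correct.
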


\begin{theorem}\label{thm3D}
Let $u\in H^{p+1}(\Omega)$, $\Omega\subset {\mathbb R}^3$. In addition, let the following assumption hold.
\begin{itemize}\label{2H}
\item[A1.]  $u^R\in {\mathbb P}_p^h $ on a family of regular subdivisions $\{{\mathcal T}_h\}$ by tetrahedrons or hexahedrons.
 i.e., $u^R\in {\mathbb P}_p$ on each element.
 \end{itemize}
 Then there exists a positive constant $C_1$
independent of $h,u$ and $u^R$ such that
\[ \norm u-u^R\norm_{L^2(\Omega)}\geq C_1h^{p+1}\left(
\sqrt{\sum_{i=1}^{N^\circ_f}h^n_{\min}\norm \tilde
D_i\norm^2}-|u|_{H^{p+1}(\Omega)}\right), \] where $n=3$ and the
components of $\tilde D_i$ are
\begin{equation}\label{jump22}
\tilde
D_i^{\alpha}=\frac{J_i^{(\alpha)}}{h^{p+1}h^{-|\alpha|}_{\min}},\quad
J^{(\alpha)}_i=\llbracket\partial^\alpha u^R\rrbracket_{x_i},\quad
|\alpha|=k,\,\, 0\leq k\leq p.
\end{equation}
Here $N^\circ_f$ is the number of interior faces and $h_{\min}$ is
the least edge length.
\end{theorem}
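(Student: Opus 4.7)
The plan is to mimic the two-dimensional proof of Theorem \ref{thm2.3} step by step, using the three ingredients (i)--(iii) distilled from the 1-D argument: a dual covolume mesh admitting an $L^2$ projection of optimal order, a uniform choice of a ball $\Omega_i^* = S_i$ of radius $\delta = c\,h_{\min}$ centered at the face centroid $x_i$, and the application of Lemma \ref{crucial} after a scaling to the unit configuration. Once these are in place, the Taylor expansions (\ref{sam1})--(\ref{sam2}) at $x_i$ (now with multi-indices $\alpha$, $|\alpha|\le p$, in three variables) split $u^R - w$ antisymmetrically across the interior face, the change of variable $\xi = (x - x_i)/h_{\min}$ extracts the factor $h_{\min}^n h^{2p+2}$ with $n=3$, and the triangle inequality combined with summation over $N_f^\circ$ interior faces finishes the proof exactly as before.

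For the tetrahedral case, I would build ${\mathcal T}_h^*$ by inserting the barycenter (or circumcenter when available) of each tetrahedron $T\in {\mathcal T}_h$ and connecting it to the four vertices, obtaining four sub-tetrahedra per element. For an interior triangular face $f_i$ common to $T_L, T_R$, the two sub-tetrahedra sharing $f_i$ (one from each side) form a covolume $\Omega_{i,L}^*\cup\Omega_{i,R}^*$, as in the picture (tetrahedron $ADCG$ union tetrahedron $ADCF$). I would then cite Girault--Raviart \cite[pp.~101--109]{GR} for the existence of a local $L^2$ projection $u^I \in {\mathbb P}_h^*$ on this dual mesh satisfying (\ref{covolume}) with a constant depending only on shape regularity; this takes care of ingredient (i).

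The main obstacle is ingredient (ii): exhibiting a uniform $\delta = c\,h_{\min}$ such that the ball $S_i$ of radius $\delta$ centered at the centroid $x_i$ of any interior face fits in the interior of $\bar\Omega_{i,L}^*\cup\bar\Omega_{i,R}^*$. In 3-D the relevant distances are the perpendicular distances from $x_i$ to the lateral triangular faces of the two sub-tetrahedra. I would bound these from below via Lemma \ref{min}: because both the dihedral angles along the edges of $f_i$ and the solid angles at the vertices of $f_i$ are uniformly bounded below by $\beta_0 > 0$, elementary trigonometry on the triangle formed by $x_i$, the nearest boundary vertex $V$, and the foot of the perpendicular $F$ yields
\[
|\overline{x_i F}| \;\ge\; |\overline{V x_i}|\sin(\beta_0/2) \;\ge\; \tfrac{1}{2}\,h_{\min}\sin(\beta_0/2),
\]
since $|\overline{V x_i}|$ is at least half of the shortest edge on the face (the centroid lies at distance $\ge \tfrac{1}{3}$ of an edge length from each vertex, and the barycenter of the opposite tetrahedron is at a positive fraction of $h_{\min}$ from the plane of $f_i$ by the minimal dihedral angle condition). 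Taking $\delta = \tfrac{1}{4}h_{\min}\sin(\beta_0/2)$ (adjusted by a dimensional constant) therefore works uniformly. Because different interior faces sit in disjoint covolume-interiors, the balls $\{S_i\}$ are automatically non-overlapping.

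For the hexahedral case under assumption A1, I would use the standard dual construction that inserts the intersection of diagonals (or the barycenter) of each hexahedron, again producing a covolume across each interior quadrilateral face, with $L^2$ projection error of optimal order on the resulting dual partition. The only geometric point to verify is again a uniform lower bound on $\delta$; I would invoke the analogue of \cite{CH} in 3-D, namely that regularity of hexahedral partitions implies a lower bound on all dihedral and solid angles of the cells and of the reference sub-tetrahedra, reducing the geometry to the already treated tetrahedral configuration. After this, applying Lemma \ref{crucial} to the scaled expression, extracting $h_{\min}^{n} h^{2p+2}$ with $n=3$, summing over $i=1,\dots,N_f^\circ$, and using $\|u - u^R\|_{L^2(\Omega)}\ge \|u^I-u^R\|_{L^2(\Omega)} - \|u-u^I\|_{L^2(\Omega)}$ together with (\ref{covolume}) yields the stated inequality, completing the proof.
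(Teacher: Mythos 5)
Your overall architecture matches the paper's proof exactly: dual covolume mesh by barycenter insertion, local $L^2$ projection for (\ref{covolume}), a uniform ball of radius $\delta=c\,h_{\min}$ at each face barycenter, scaling to the unit configuration, and Lemma \ref{crucial}. Ingredients (i) and (iii) are handled correctly. The problem is ingredient (ii), which you correctly identify as ``the main obstacle'' but then dispose of with a two-dimensional argument that does not survive the passage to three dimensions. Your inequality $|\overline{x_iF}|\ge |\overline{Vx_i}|\sin(\beta_0/2)$, with $V$ a vertex of the face and $F$ the foot of the perpendicular from $x_i$ onto a lateral face of the half-covolume, is not justified: the angle $\angle(Vx_i,VF)$ is a skew angle between a segment and a plane, and it is not half of (or any fixed fraction of) a dihedral or solid angle of the mesh. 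The correct elementary identity is $|\overline{x_iH}|=|\overline{Kx_i}|\sin\theta_s$, where $K$ is the foot of the perpendicular from $x_i$ onto the \emph{common edge} $AD$ and $\theta_s=\angle HKx_i$; and $\theta_s$ is \emph{not} the dihedral angle $\theta_f$ along $AD$ between the face $ADC$ and the lateral face $ADG$. The paper must prove the nontrivial relation $\cos\theta_f=\tfrac13\cos\theta_s$ (Lemma \ref{angle_relation}) to convert a bound on $\theta_f$ into a bound on $\sin\theta_s$. Your proposal contains no substitute for this step.

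There is a second, independent gap in the same place: even granting the trigonometry, the dihedral angle $\theta_f=\angle(ADC,ADG)$ involves the \emph{inserted} barycenter $G$, so the lateral face $ADG$ is not a face of the primary mesh and Lemma \ref{min} does not directly control $\theta_f$. The paper bridges this with Lemma \ref{angle_relation2}, comparing $\cos\theta_f$ with $\cos\theta_{ext}$ where $\theta_{ext}=\angle(ADC,ADE)$ is a genuine dihedral angle of the primary tetrahedron, and only then invokes the minimal angle condition. Your appeal to ``the dihedral angles along the edges of $f_i$ and the solid angles at the vertices of $f_i$'' being bounded below by $\beta_0$ silently conflates angles of the sub-tetrahedra of the dual mesh with angles of ${\mathcal T}_h$. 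To repair the proof you would need either the paper's chain $\theta_s\to\theta_f\to\theta_{ext}\to\beta_0$, or some replacement such as a volume-over-area bound $\mathrm{dist}(x_i,\mathrm{plane}(ADG))=3\,\mathrm{vol}(x_iADG)/\mathrm{area}(ADG)$ estimated via shape regularity; as written, the geometric heart of the three-dimensional case is missing. The same caveat applies to your hexahedral paragraph, where the cited three-dimensional analogue of \cite{CH} is asserted rather than established.
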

\begin{proof} We consider the tetrahedral mesh case first.
 In Figure \,\ref{fig2}, $\triangle ADC$ is a typical common tetrahedral element interface with the accompanying
 half-covolumes $\Omega_{i,L}^*=AFCD$ and $\Omega_{i,R}^*=AGCD$. The barycenter of $\triangle ADC$ is ${x_i}$ (not labeled to avoid clustering).
 We need to choose the common radius $\delta$ of the sphere centered at $x_i$ that works for all $i$.  Assume that the shortest distance from $x_i$ to the boundary
 of covolumes is attained by the plane containing $\triangle ADG$. In the right side of Figure  2, a blowup of the situation is
 shown, the perpendicular foot from $x_i$ is $H$. Let $A_M$ be the midpoint of $CD$ and let $A_{\perp}$ be the foot on $CD$ of the perpendicular
 from $A$, then
\begin{equation}\label{M}
  |\,\overline{AA_M}\,|\geq |\,\overline{AA_\perp}\,|=|\,\overline {AC}\,|\sin\angle
 ACD \geq h_{\min}\sin \beta_0,
 \end{equation}
 where $\beta_0$ is the common lower bound for angles between edges in the minimal angle
 condition.

  Now let $K$ be the perpendicular foot on $\overline{AD}$ from $x_i$ (see Figure  \ref{fig3} for the blowup version) and let $\theta_s=\angle HKx_i$. In general $\theta_s$ is not
  equal to $\theta_f$, the (dihedral) angle between planes determined by $ADC$ and $ADG$ unless $\overline{HK}\perp \overline{AD}$. Since $\overline{x_iH}$ is normal to plane $ADG$
  and $\overline{x_iK}$ is normal to $\overline{AD}$, the two angles are expected to be related. This is indeed the case. In fact,

 \begin{equation}\label{ang}
 \cos \theta_f=\frac 1 3\cos\theta_s,
 \end{equation}
 which will be proved in Lemma \ref{angle_relation} below.
Concentrating on $ \triangle Ax_iD$, we see that
\begin{eqnarray*}
|\,\overline{Kx_i}\,|&=&|\,\overline{Ax_i}\,|\sin\angle x_iAD=|\,\overline{Ax_i}\,|\sin \frac 12\angle CAD\\
&=&\frac 23 |\,\overline{AA_M}\,|\sin \frac 12\angle CAD\\
&\geq& \frac 23 h_{\min}\sin\beta_0\sin\frac {\beta_0}{2},
\end{eqnarray*}
where we used (\ref{M}) in the last inequality.

With reference to Figure \ref{fig4} (local blowup version of the left figure in Figure \ref{fig1}), let $\theta_{ext}=\angle(ADC,ADE)$, the angle between the planes $ADC$ and$ADE$. Then there exists a constant $\gamma>1$ independent of $h$ such that
\begin{equation}\label{compare}
  \cos\theta_f\leq \gamma \cos\theta_{ext},
  \end{equation}
  whose proof is given in Lemma \ref{angle_relation2} below.

Letting $g(\theta_f):=\sin\left(\cos^{-1}(3\cos\theta_f)\right)$ (an increasing function on $[0,\pi/2]$), and using (\ref{ang})--(\ref{compare}), we have
\begin{eqnarray*}
  \delta&=&|\,\overline{x_iH}\,|=|\,\overline{Kx_i}\,|\sin\theta_s\\
  &\geq& \frac 2 3 h_{\min}\sin\frac {\beta_0}{2}\sin\beta_0\,g(\theta_f)\\
  &\geq& \frac 2 3 h_{\min}\sin\frac {\beta_0}{2}\sin\beta_0\,g(\cos^{-1}\gamma\cos\theta_{ext})\\
  &\geq&\frac 2 3 h_{\min}\sin\frac {\beta_0}{2}\sin\beta_0\,g(\cos^{-1}\gamma \cos\beta_0),
\end{eqnarray*}
where we have used the minimal angle condition.
Thus we can take $\delta$ to be half of the last expression in the above inequality. The rest of the proof is as before.

As for hexahedral case, its local geometry around $x_i$ is similar to the tetrahedral case. Similar comments made for quadrilateral case
apply here too.  This completes the proof.
\end{proof}
\begin{figure}[!htb]
\centering
\includegraphics[scale=0.55]{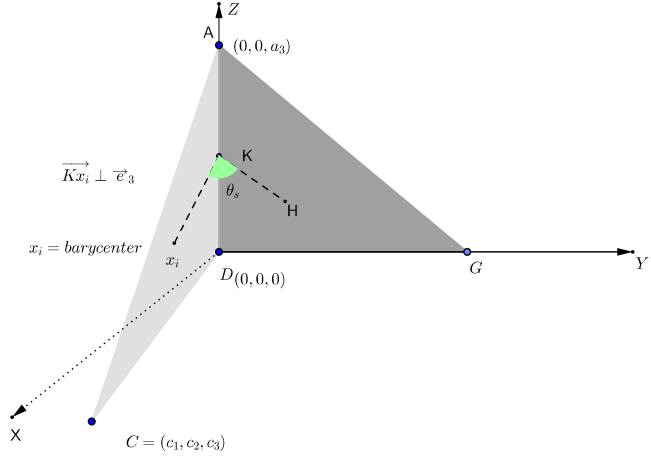}
\caption{ If $\theta_f=$ dihedral angle between $YZ-$plane and $\triangle ADC$, $\theta_s=\angle HKx_i$, then
$\cos \theta_f=1/3 \cos \theta_s$}
\label{fig3}
\end{figure}

\begin{figure}[!htb]
\centering
\includegraphics[scale=0.45]{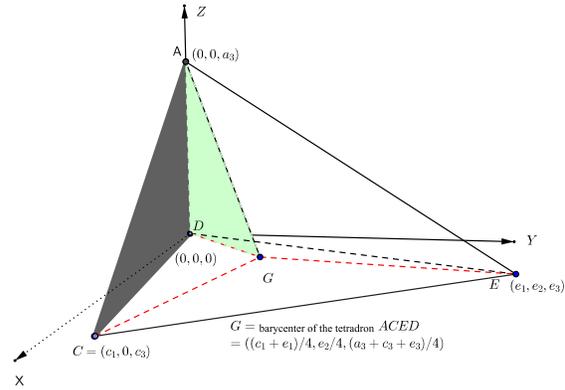}
\caption{ If $\theta_f=\angle(ADC,ADG)$ and  $\theta_{ext}=\angle(ADC,ADE)$, then
$\theta_f<\theta_{ext}$}
\label{fig4}
\end{figure}

\begin{lemma}\label{angle_relation}
With reference to Figure  \ref{fig3}, let $AD$ be the intersection of two triangles $ADC$ and $ADG$ in ${\mathbb R}^3$ and denote by
 ${\theta_f}$  $\angle(ADC,ADG)$, the angle between them (angle between their normals).
 Let $x=x_i$ be the barycenter of $\triangle ADC$,
 $K\in \overline{AD}$, $\overline{xK}\perp \overline{AD}$ and $H\in \triangle ADG$ such that
  $\overline{xH}\perp \triangle ADG$. Let $\theta_s=\angle HKx$. Then
\[\cos {\theta_f}=\frac 1 3\cos\theta_s.\]

\end{lemma}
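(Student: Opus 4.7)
The plan is to prove the identity by a direct coordinate calculation. Place $A$ at the origin and align $\overline{AD}$ along the positive $x$-axis, orienting the axes so that $\triangle ADC$ lies in the $xy$-plane with $C$ on the side $y > 0$. Then the plane of $\triangle ADG$ is the rotation of the $xy$-plane about the $x$-axis by angle $\theta_f$; its unit normal is $n = (0,-\sin\theta_f,\cos\theta_f)$, and $\cos\theta_f = n\cdot(0,0,1)$ is immediate from the construction.

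With $D = (d, 0, 0)$ and $C = (c_1, c_2, 0)$, the barycenter is $x = \tfrac{1}{3}(c_1 + d,\, c_2,\, 0)$. The perpendicularity condition pins down $K = (\tfrac{c_1+d}{3}, 0, 0)$ and $\overline{Kx} = (0, c_2/3, 0)$. For $H$, I would apply the standard projection formula $H = x - (x\cdot n)n$ to obtain a closed-form expression for $\overline{KH} = H - K$, and then evaluate $\cos\theta_s$ from the dot-product formula
\[
\cos\theta_s \;=\; \frac{\overline{Kx}\cdot\overline{KH}}{|\overline{Kx}|\,|\overline{KH}|}.
\]
Comparing with the expression for $\cos\theta_f$ read off from $n$ should produce the claimed identity $\cos\theta_f = \tfrac{1}{3}\cos\theta_s$; the factor $\tfrac{1}{3}$ is expected to arise from the centroid's trisection of the median, $|Ax| = \tfrac{2}{3}|AA_M|$, where $A_M$ is the midpoint of $\overline{CD}$, which is the same barycentric scaling already invoked in the proof of Theorem~\ref{thm3D}.

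The main obstacle is to extract the factor $\tfrac{1}{3}$ cleanly, because a naive projection argument — noting that $\overline{Kx}$ is perpendicular to the shared edge $\overline{AD}$ and lies in plane $ADC$ — would identify the angle between $\overline{Kx}$ and its orthogonal projection onto plane $ADG$ directly with the dihedral angle $\theta_f$. The discrepancy between that naive identification and the true $\theta_s = \angle HKx$ must therefore be traced to the fact that $H$ is the perpendicular foot from $x$ (not from $K$) to plane $ADG$, and the barycenter's offset from $K$ along the median direction then couples with the projection in a way that produces the factor $\tfrac{1}{3}$. Careful bookkeeping of which perpendicular-foot construction is used at each stage, and of which quantities scale with the barycentric fraction, will be the technical heart of the calculation.
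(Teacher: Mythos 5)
Your strategy---coordinates adapted to $\overline{AD}$, then a direct evaluation of both cosines---is the same one the paper uses (the paper puts $\overline{AD}$ on the $z$-axis and takes the plane $ADG$ to be the $YZ$-plane). The difficulty is that your plan, executed with the definitions actually stated in the lemma, cannot produce the factor $\tfrac13$: it produces $\cos\theta_s=\cos\theta_f$. Completing your own computation: with $x=\tfrac13(c_1+d,\,c_2,\,0)$, $K=(\tfrac{c_1+d}{3},0,0)$ and $n=(0,-\sin\theta_f,\cos\theta_f)$ one finds
\[
H=x-(x\cdot n)\,n=\Bigl(\tfrac{c_1+d}{3},\;\tfrac{c_2}{3}\cos^2\theta_f,\;\tfrac{c_2}{3}\sin\theta_f\cos\theta_f\Bigr),
\]
so $\overrightarrow{KH}=\tfrac{c_2}{3}\cos\theta_f\,(0,\cos\theta_f,\sin\theta_f)$, $\overrightarrow{Kx}=(0,\tfrac{c_2}{3},0)$, and the dot-product formula gives $\cos\theta_s=\cos\theta_f$ exactly. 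This is not a coordinate accident but the theorem of three perpendiculars: since $\overline{xH}$ is normal to the plane $ADG$ and $\overline{xK}\perp\overline{AD}$, the line $AD$ is perpendicular to the plane $xKH$, hence $\overline{KH}\perp\overline{AD}$ automatically and $\angle HKx$ \emph{is} the dihedral angle. The mechanism you hope will generate the $\tfrac13$---that $H$ is the foot from $x$ rather than from $K$---only rescales $\overrightarrow{KH}$ along the fixed direction perpendicular to $\overline{AD}$ inside the plane $ADG$ and cannot change the angle at $K$; the barycentric ratio $|Ax|=\tfrac23|AA_M|$ never enters $\angle HKx$ at all. So the closing step ``comparing should produce the claimed identity'' is a genuine gap: it will not close.

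For comparison, the paper's own proof is also a coordinate computation, but its stated point $K=(0,0,\tfrac{a_3+c_3}{3})$ does not share the $z$-coordinate of $x$, i.e.\ it is \emph{not} the orthogonal foot of $x$ on $\overline{AD}$; this contradicts the hypothesis $\overline{xK}\perp\overline{AD}$ and is the only place a factor other than $1$ can enter. Before doing any calculation you therefore need to pin down which point $K$ (equivalently, which angle $\theta_s$) the lemma really intends. Note also that the identity $\theta_s=\theta_f$ that your computation actually yields is all the application in Theorem \ref{thm3D} requires, since there one only needs $\delta=|\overline{Kx_i}|\sin\theta_s$ bounded below by a fixed multiple of $h_{\min}$, and a lower bound on $\theta_f$ is supplied by Lemma \ref{angle_relation2} together with the minimal angle condition.
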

\begin{proof}
By the properties of $x$, $H$ and $K$, one derives
\[x=(\frac {c_1}{3},\frac {c_2}{3},\frac{c_3+a_3}{2}),\quad H=(0,\frac{c_2}{3},\frac{c_3+1}{3}),\quad K=(0,0,\frac {a_3+c_3}{3}).\]
Let $n_1=(1,0,0)$ and $n_2=\overrightarrow{xK}\times\overrightarrow{xC}$ normalized. Using $\overrightarrow{KH}$ and $\overrightarrow{Kx}$ to compute
$\cos\theta_s$ and using $\cos\theta_f=n_1\cdot n_2$, after some computation we get $\cos\theta_f=1/3\cos\theta_s$.
\end{proof}

\begin{lemma}\label{angle_relation2}
 With reference to Figure  \ref{fig4}, let $AD$ be the intersection of two triangles $ADC$ and $ADG$ in ${\mathbb R}^3$ and let
 \[{\theta_f}=\angle(ADC,ADG), \qquad \theta_{ext}=\angle (ADC, ADE).\]
Then there exists a constant $\gamma >0$ independent of $h$ such that
\begin{equation}\label{angle2}
  \cos\theta_{ext}<\cos\theta_f\leq \gamma \cos\theta_{ext}.
  \end{equation}
\end{lemma}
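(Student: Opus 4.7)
The plan is to reduce the three-dimensional dihedral-angle comparison to a two-dimensional trigonometric estimate by projecting orthogonally onto the plane $\pi$ perpendicular to the shared edge $\overline{AD}$. Let $O\in\pi$ denote the common image of $A$ and $D$, and let $C',E',G'$ denote the images of $C,E,G$. In the three-dimensional covolume construction $G$ is an interior point (the barycenter) of the primary tetrahedron $ACDE$, so linearity of orthogonal projection yields the explicit identity $G'=(C'+E')/4$. The two dihedral angles then become plane angles at the single point $O$: $\theta_{ext}=\angle(OC',OE')$ and $\theta_f=\angle(OC',OG')=\angle(OC',O(C'+E'))$. With $r=|OC'|$ and $s=|OE'|$ the whole question reduces to the elementary formula
\[
\cos\theta_f \;=\; \frac{r+s\cos\theta_{ext}}{\sqrt{r^{2}+2rs\cos\theta_{ext}+s^{2}}},
\]
together with the constraints on $r$, $s$, and $\theta_{ext}$ supplied by the mesh regularity.

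The left inequality $\cos\theta_{ext}<\cos\theta_f$, equivalently $\theta_f<\theta_{ext}$, is the geometric statement that the vector $C'+E'$ lies strictly inside the convex cone spanned by $OC'$ and $OE'$ in $\pi$. I would verify it algebraically by showing that $\cos^{2}\theta_f-\cos^{2}\theta_{ext}$ factors as $(1-\cos^{2}\theta_{ext})$ times a strictly positive polynomial in $r,s,\cos\theta_{ext}$, so this step is purely trigonometric and requires no regularity hypothesis.

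For the right inequality the essential input is Lemma~\ref{min}: the minimal angle condition applied simultaneously to the solid angles between edges and the dihedral angles between faces forces $\theta_{ext}\in[\beta_0,\pi-\beta_0]$ uniformly and bounds the perpendicular distances $r,s$ in a range $[c_1 h,c_2 h]$ with the ratio $r/s$ controlled by the shape-regularity constant. Substituting these bounds into the explicit formula for $\cos\theta_f$ and matching against $\cos\theta_{ext}$ then yields a constant $\gamma=\gamma(\sigma,\beta_0)$ independent of $h$. The main obstacle is to prevent the ratio $\cos\theta_f/\cos\theta_{ext}$ from blowing up when $\theta_{ext}$ is near $\pi/2$: the numerator $r+s\cos\theta_{ext}$ tends to $r>0$ while $\cos\theta_{ext}\to 0$. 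Handling this requires exploiting the solid-angle portion of the minimal-angle condition, and not just the dihedral-angle bound at $\overline{AD}$, to rule out the degenerate configurations in which the cancellation could actually occur for barycenters of shape-regular tetrahedra; the remaining calculation is then a routine estimate on a compact parameter region.
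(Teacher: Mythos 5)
Your reduction is correct and is in substance the same computation as the paper's, written invariantly: the paper places $\overline{AD}$ along the third coordinate axis and the plane $ADC$ in the $xz$-plane, so its quantities $c_1,e_1,e_2$ are exactly your $r=|\overrightarrow{OC'}|$, $s\cos\theta_{ext}$, and the component of $\overrightarrow{OE'}$ orthogonal to $\overrightarrow{OC'}$; substituting $r=c_1$, $s=\sqrt{e_1^2+e_2^2}$, $\cos\theta_{ext}=e_1/s$ turns your displayed formula into the paper's $\cos\theta_f=(c_1+e_1)/\sqrt{(c_1+e_1)^2+e_2^2}$. (Minor point: since $G$ is the barycenter of the tetrahedron $ADCE$, the projection gives $\overrightarrow{OG'}=\tfrac14(\overrightarrow{OC'}+\overrightarrow{OE'})$, i.e.\ your identity holds with $O$ taken as origin; the direction, hence the angle, is unaffected.) Your convex-cone argument for the left inequality is fine and is equivalent to the paper's monotonicity observation $f(c_1)>f(0)$.

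The genuine gap is the right inequality, and while your diagnosis of the obstacle is exactly right, the escape route you propose cannot work. Lemma~\ref{min} provides only lower bounds on dihedral and solid angles; it does not bound $\theta_{ext}$ away from $\pi/2$. A corner tetrahedron cut from a cube, $A=(0,0,0)$, $D=(0,0,1)$, $C=(1,0,0)$, $E=(0,1,0)$, is shape regular, yet the faces $ADC$ and $ADE$ meet at exactly $\pi/2$ along $AD$, so $\cos\theta_{ext}=0$ while your formula gives $\cos\theta_f=r/\sqrt{r^2+s^2}>0$. Thus the ratio $\cos\theta_f/\cos\theta_{ext}$ has a genuine pole on your ``compact parameter region'' no matter how well $r/s$ and the angle bounds are controlled, and no amount of solid-angle information alone removes it. To close the argument you need an input beyond regularity: either an acuteness-type restriction on the dihedral angles, or a proof that for the particular face realizing the shortest distance from $x_i$ (the only configuration in which the paper claims the lemma, asserting there that $e_1>0$ and that $f(1)$ is ``comparable to'' $e_1/\sqrt{e_1^2+e_2^2}$) the angle $\theta_{ext}$ is uniformly bounded away from $\pi/2$. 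Neither you nor the paper supplies that bound, so as written the step $\cos\theta_f\leq\gamma\cos\theta_{ext}$ remains unproved.
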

\begin{proof} The lemma is {\it stated and proved} in the context of the main theorem and hence Figure \ref{fig4} configuration is assumed.
Since $G$ is the barycenter, it is the average of four vertices of $ADCE$,
\[G=(\frac{c_1+e_1}{4},\frac{e_2}{4},\frac{a_3+c_3+e_3}{4}),\]
and three normal vectors (some not normalized) to the planes from left to right in Figure \ref{fig4} are
\[n_1=-\overrightarrow{j},\quad n_2=-e_2\overrightarrow{i}+(c_1+e_1)\overrightarrow{j},\quad n_3=e_2\overrightarrow{i}-e_1\overrightarrow{j}.\]
Thus \[
\cos\theta_f=\frac{c_1+e_1}{ \sqrt{(c_1+e_1)^2+e_2^2} },\qquad \cos \theta_{ext}=\frac{e_1}{\sqrt{e_1^2+e_2^2}}.
\]
In our figure,we have $c_1>0$ and $e_1>0$, which is the case for the shortest distance case.
Note that $f(c)=\frac{c+e_1}{ \sqrt{(c+e_1)^2+e_2^2} }$ is increasing on $[0,\infty]$ and so  $f(c_1)\geq f(0)$ implies $\cos\theta_f\geq \cos\theta_{ext}$. On the other hand, $c_1$ is an edge size and so we can assume it is bounded by $1$. Thus
 we use $f(c_1)\leq f(1)$. In turn since $f(1)$ is comparable to $\frac{e_1}{\sqrt{e_1^2+e_2^2}}$, we see that there exists a $\gamma$ independent of $h$
such that (\ref{angle2}) holds.
\end{proof}

\begin{theorem}\label{thm2.4}
 Suppose that
 \[\text{H0}:\hfill u \mbox{ is in } W_1^{p+1}(\Omega),\Omega\subset {\mathbb R}^n,\]
 where $n=2$ and that assumption H1 of Theorem \ref{thm2.3} holds. Then there are constants $C_1>0$ independent of $h,u$ and $u^R$ such
that
\begin{equation}\label{above}
 \norm u-u^R\norm_{L^1(\Omega)}\geq C_1
h^{p+1} \left[ \sum_{1\leq i\leq N^\circ_e} h^n_{\min}\norm\tilde
D_i\norm-|u|_{W^{p+1}_1(\Omega)}\right],
\end{equation}
where $\tilde D_i$ has components
\begin{equation}\notag
\tilde D_i^\alpha=J_i^{(\alpha)}/(h^{p+1}h^{-|\alpha|}_{\min}),\quad J^{(\alpha)}_i=\llbracket\partial^\alpha u^R\rrbracket_{x_i},\,\,
|\alpha|=k,\,\, 0\leq k\leq p.
\end{equation}

Suppose assumptions H0 and A1 of Theorem \ref{thm3D}  holds for $n=3$. Then (\ref{above}) holds with $N_e^\circ$ replaced by $N_f^\circ$.
\end{theorem}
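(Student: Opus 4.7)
The plan is to mimic the proofs of Theorems \ref{thm2.3} and \ref{thm3D}, replacing the $L^2$ ingredients with their $L^1$ counterparts throughout. The underlying geometry is unchanged: I would use the same covolume dual meshes $\{\mathcal{T}_h^*\}$ produced by connecting the vertices of each element to an interior point (barycenter in 2-D, or intersection of diagonals / interior point in the hexahedral case), and the same local balls $\Omega_i^*$ of radius $\delta \sim h_{\min}$ centered at midpoints of interior edges in 2-D (respectively, barycenters of interior faces in 3-D). The choices of $\delta$ made in the proofs of Theorems \ref{thm2.3} and \ref{thm3D} (depending only on the minimal-angle constants $\theta_0$ or $\beta_0$) still guarantee disjointness of the $\Omega_i^*$ and their containment in two adjacent half-covolumes.

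First I would obtain the $L^1$ analogue of the dual-mesh approximation estimate (\ref{covolume}):
\begin{equation*}
\|u - u^I\|_{L^1(\Omega)} \leq C_2\, h^{p+1}\, |u|_{W^{p+1}_1(\Omega)},
\end{equation*}
with $u^I \in \mathbb{P}^*_h$ a standard local $L^1$-type projection on the quadrilateral (2-D) or two-tetrahedra (3-D) covolumes; existence follows under the inherited shape regularity of the dual mesh by the same construction cited from Girault--Raviart. The triangle inequality then gives
\begin{equation*}
\|u - u^R\|_{L^1(\Omega)} \geq \|u^I - u^R\|_{L^1(\Omega)} - C_2\, h^{p+1}\, |u|_{W^{p+1}_1(\Omega)},
\end{equation*}
and since the $\Omega_i^*$ are pairwise disjoint subsets of $\Omega$,
\begin{equation*}
\|u^I - u^R\|_{L^1(\Omega)} \geq \sum_{i}\, \|u^I - u^R\|_{L^1(\Omega_i^*)}.
\end{equation*}
Note the crucial contrast with (\ref{2.4}): here the $L^1$ triangle inequality produces directly an $\ell^1$-sum rather than an $\ell^2$-sum, which is exactly the structural change that appears in (\ref{above}).

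Next I would repeat, on each $\Omega_i^* = \Omega_{i,-}^* \cup \Omega_{i,+}^*$, the jump decomposition of $u^R - w$ from (\ref{sam1})--(\ref{sam2}), replace $\|u^I - u^R\|_{L^1(\Omega_i^*)}$ by $\min_{v \in \mathbb{P}_p} \|v - (u^R - w)\|_{L^1(\Omega_i^*)}$ (since $u^I$ is a single polynomial on the covolume), and then scale via $\xi = (x - x_i)/h_{\min}$. Because the $L^1$ change of variables contributes a full factor $h_{\min}^n$ (not $h_{\min}^{n/2}$ as in $L^2$), the counterpart of (\ref{extract}) reads
\begin{equation*}
\|u^I - u^R\|_{L^1(\Omega_i^*)} \geq h_{\min}^n\, h^{p+1}\, Q_1(\tilde D_i^0, \tilde D_i^1, \ldots, \tilde D_i^p),
\end{equation*}
where $Q_1$ is the obvious $L^1$ analogue of $Q$ from Lemma \ref{crucial}. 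I would then establish $Q_1(\Delta) \geq C_p \|\Delta\|$ by the same nondegeneracy reasoning as in Lemma \ref{crucial}: $Q_1$ is continuous, positively homogeneous of degree one in $\Delta$, and vanishes only at $\Delta = 0$, hence compactness on the unit sphere furnishes the lower bound. Summing over $i$ and combining with the triangle inequality yields (\ref{above}).

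I do not expect a genuine obstacle. Every geometric ingredient is imported intact from the previous two theorems, and the $L^1$ analogues of both Lemma \ref{crucial} and (\ref{covolume}) are routine. The only substantive adjustments are bookkeeping: the scaling exponent on $h_{\min}$ becomes $n$ rather than $n/2$, which is why $h_{\min}^n$ appears linearly (instead of under a square root), and the triangle inequality assembles the local contributions in $\ell^1$ instead of $\ell^2$. These two features are precisely what differentiate (\ref{above}) from the $L^2$ conclusions. The 3-D hexahedral subcase is handled exactly by the remark already made for hexahedra in the proof of Theorem \ref{thm3D}, with no new ingredient required.
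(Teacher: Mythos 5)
Your proof is correct, and its skeleton (dual covolume mesh, $L^1$ interpolation estimate on $\ps^*_h$, triangle inequality, disjoint balls $\Omega_i^*$ assembling the local contributions in $\ell^1$) is exactly the paper's; the genuine divergence is in how the local lower bound on each $\Omega_i^*$ is obtained. You rebuild the entire minimization argument in $L^1$: you introduce an $L^1$ analogue $Q_1$ of the form in Lemma \ref{crucial}, establish $Q_1(\Delta)\ge C_p\norm\Delta\norm$ via degree-one positive homogeneity plus compactness of the unit sphere (a necessary substitution, since $Q_1$ is no longer a quadratic form and the paper's linear-algebra proof of Lemma \ref{crucial} does not transfer verbatim), and extract the full Jacobian factor $h^n_{\min}$ in the change of variables. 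The paper instead proves no new lemma: because $u^I-u^R$ restricted to $\Omega_i^*$ lives in a fixed finite-dimensional space of two-piece polynomials, a scaling (inverse-estimate) argument gives $\norm u^I-u^R\norm_{L^1(\Omega_i^*)}\ge C\,h^{n/2}_{\min}\norm u^I-u^R\norm_{L^2(\Omega_i^*)}$, and then the already-established $L^2$ bound (\ref{2.4}) together with (\ref{cs}) delivers $\norm u^I-u^R\norm_{L^1(\Omega_i^*)}\ge C\,h^n_{\min}h^{p+1}\norm\tilde D_i\norm$ --- the same quantity you reach. Your route is self-contained in $L^1$, avoids the inverse inequality, and adapts cleanly to other $L^s$ norms; the paper's route is shorter because it recycles the $L^2$ machinery wholesale. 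Both yield (\ref{above}) with the same structure of constants.
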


\begin{proof}
As before, for  $u\in W^{p+1}_1(\Omega)$, there is a $u^I\in
{\mathbb P}_h^*$ so that
\[   \norm u-u^I\norm_{L^1(\Omega)}\le Ch^{p+1}|u|_{ W^{p+1}_1(\Omega) },
\]
where $C>0$ is a constant independent of $u$ and $h$.
Now by the triangle inequality
\begin{eqnarray*}
 \norm u-u^R\norm_{L^1(\Omega)} &\geq&\norm u^I-u^R\norm_{L^1(\Omega)}- \norm u-u^I\norm_{L^1(\Omega)}\\
 &\geq& \sum_{i=1}^{N_e^\circ} \norm u^I-u^R\norm_{L^1(\Omega_i^*)}-C_2h^{p+1}|u|_{ W^{p+1}_1(\Omega)}.
\end{eqnarray*}
By the standard scaling argument, and (\ref{2.4}) or the argument leading to it, we have
\[\norm u^I-u^R\norm_{L^1(\Omega_i^*)}\geq C_2 h^{n/2}_{\min}\norm u^I-u^R\norm_{L^2(\Omega_i^*)}\geq C_2 h^n_{\min} h^{p+1}\sqrt{Q(\tilde D_i^0,\tilde D_i^1,\cdots, \tilde D_i^p)}.
\]
Now invoking (\ref{cs}) and taking minimum of the constants completes proof for the two dimensional case.
\end{proof}

\begin{theorem}\label{thm2.5}
Suppose \[ H_\infty:\quad u\in W_\infty^{p+1}(\Omega),\,
\Omega\subset {\mathbb R}^n,\] where $n=2$. In addition, H1 of Theorem
\ref{thm2.3} holds.
 Then there exists a constant
$C_\infty>0$, independent of $h,u$ and $u^R$, such that
\begin{equation}\label{above1}
 \norm u-u^R\norm_{L^\infty(\Omega)}\geq C_\infty
h^{p+1}\left[(\frac{h_{\min}}{h})^{n/2} \max_{1\leq i\leq N^\circ_e}
\norm\tilde D_i\norm -|u|_{W^{p+1}_\infty(\Omega)}\right],
\end{equation}
 where $n=2$ and  $\tilde D_i$ has components
\[\tilde D_i^{\alpha}=J_i^{(\alpha)}/(h^{p+1}h^{-|\alpha|}_{\min}),\qquad |\alpha|=k,\,\, 0\leq k\leq p.
\]

Suppose assumptions H0 and A1 of Theorem \ref{thm3D} hold for $n=3$. Then (\ref{above1}) holds.  Now $x_i$ is the
face center and $N_e^\circ$ replaced by $N_f^\circ$.
\end{theorem}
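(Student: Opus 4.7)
The plan follows the same four-step architecture as the proofs of Theorems~\ref{thm2.3}, \ref{thm3D}, and \ref{thm2.4}: (i) a best-approximation on the dual covolume mesh, (ii) a reverse triangle inequality, (iii) the localized $L^2$ lower bound via Lemma~\ref{crucial}, and (iv) a conversion of the localized estimate from $L^2$ to $L^\infty$.

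First I would pick an interpolant $u^I\in\mathbb{P}_h^*$ on the dual covolume mesh satisfying
\[\|u-u^I\|_{L^\infty(\Omega)}\le C\,h^{p+1}|u|_{W^{p+1}_\infty(\Omega)},\]
the $L^\infty$ analogue of (\ref{covolume}). Existence is standard since $u\in W^{p+1}_\infty\subset C(\bar\Omega)$ by Sobolev embedding, and the dual covolumes inherit shape regularity from the primary mesh (in the hexahedral case one also uses Lemma~\ref{min}). A reverse triangle inequality then gives
\[\|u-u^R\|_{L^\infty(\Omega)}\ge \|u^I-u^R\|_{L^\infty(\Omega)}-C\,h^{p+1}|u|_{W^{p+1}_\infty(\Omega)}.\]

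Next, fix an interior edge (2D) or face (3D) index $i$ and reuse the ball $\Omega_i^*$ of radius $\delta\sim h_{\min}$ already constructed in the proofs of Theorems~\ref{thm2.3} and \ref{thm3D}; its radius depends only on the shape-regularity constants, not on $i$. Localize trivially by $\|u^I-u^R\|_{L^\infty(\Omega)}\ge\|u^I-u^R\|_{L^\infty(\Omega_i^*)}$. Because $u^I-u^R$ is a polynomial of degree at most $p$ on each half-region $\Omega_{i,\pm}^*$ of diameter $\sim h_{\min}$, the H\"older bound $\|v\|_{L^2(D)}^2\le|D|\,\|v\|_{L^\infty(D)}^2$ applied separately on each side yields
\[\|u^I-u^R\|_{L^\infty(\Omega_i^*)}\ge c\,h_{\min}^{-n/2}\|u^I-u^R\|_{L^2(\Omega_i^*)}.\]
Combining this with the localized $L^2$ lower bound (\ref{2.4}) (where Lemma~\ref{crucial} was invoked in the proof of Theorem~\ref{thm2.3}) gives $\|u^I-u^R\|_{L^\infty(\Omega_i^*)}\ge c\,h^{p+1}\|\tilde D_i\|$. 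Taking the maximum over $i$ on the right and multiplying by $(h_{\min}/h)^{n/2}\le 1$ delivers (\ref{above1}). The three-dimensional case is identical once the face-centered ball $\Omega_i^*$ from the proof of Theorem~\ref{thm3D} is in hand, since the jump representations analogous to (\ref{sam1})--(\ref{sam2}) carry over verbatim.

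The main obstacle is not the polynomial norm-equivalence step, which is routine, but supplying the $L^\infty$ best-approximation estimate on the dual mesh in all four geometric settings (quadrilateral covolumes from triangles or from quadrilaterals in 2D; covolumes formed from two tetrahedra or hexahedral covolumes in 3D). This requires a local $L^\infty$-type projection onto $\mathbb{P}_p$ on the dual elements and relies on the dual-mesh shape regularity already established in the earlier theorems.
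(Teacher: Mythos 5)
Your proposal is correct and follows essentially the same route as the paper: an $L^\infty$-optimal interpolant on the dual covolume mesh, the reverse triangle inequality, localization to the balls $\Omega_i^*$, the norm comparison $\|v\|_{L^\infty(\Omega_i^*)}^2\geq |\Omega_i^*|^{-1}\|v\|_{L^2(\Omega_i^*)}^2$, and then the already-established $L^2$ lower bound (\ref{2.4}) via Lemma \ref{crucial}. The only (harmless) deviation is that you bound $|\Omega_i^*|^{-1}$ by $c\,h_{\min}^{-n}$ rather than the paper's $C h^{-n}$, which yields the slightly sharper intermediate estimate $U_i\geq c\,h^{p+1}\|\tilde D_i\|$ that you then weaken by the factor $(h_{\min}/h)^{n/2}\leq 1$ to recover (\ref{above1}) exactly as stated.
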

\begin{proof}
 Since $u\in W^{p+1}_\infty(\Omega)$, there exists
a $u^I\in \ps_h^*$ such that
\[   \norm u-u^I\norm_{L^\infty(\Omega)}\le C_1h^{p+1}|u|_{W^{p+1}_\infty(\Omega)}.
\]
Now by the triangle inequality
\begin{eqnarray*}
 \norm u-u^R\norm_{L^\infty(\Omega)} \geq\norm u^I-u^R\norm_{L^\infty(\Omega)}- \norm u-u^I\norm_{L^\infty(\Omega)}\\
 \geq \max_{i} \norm u^I-u^R\norm^2_{L^\infty(\Omega_i^*)}-C_2h^{p+1}|u|_{ W^{p+1}_\infty(\Omega)}.
\end{eqnarray*}
Let $U_i=\norm u^I-u^R\norm_{L^\infty(\Omega_i^*)}$ and use (\ref{2.4}) to derive
\begin{eqnarray*}
U_i^2&=&\frac {1}{|\Omega_i^*|}\int_{\Omega_i^*} U_i^2dx\geq \frac {1}{|\Omega_i^*|}\int_{\Omega_i^*}(u^I-u^R)^2dx\\
&\geq& Ch^{-n}\norm u^I-u^R\norm^2_{L^2(\Omega_i^*)}\geq
(h_{\min}/{h})^nh^{2p+2}Q(\tilde D_i^0,\tilde D_i^1,\cdots, \tilde D_i^p).
\end{eqnarray*}
Invoking (\ref{cs}) and taking a common minimum constant completes the proof.
\end{proof}
 Now we impose quasi-uniform conditions on the meshes to get the next theorem.
\begin{theorem}\label{main2}
Suppose that
\begin{itemize}
\item[H0.] $u\in W_s^{p+1}(\Omega)$, $\Omega\subset {\mathbb
R}^n,\quad n=2$ .\\
Suppose the following assumption holds.
\item[H1.]  $u^R \in \ps_p^h$ on a quasi-uniform family of subdivisions $\{{\mathcal T}_h\}$ by triangles or by quadrilaterals,
 i.e., $u^R\in {\mathbb P}_p$ on each element.
 \end{itemize}
 Then
 \begin{itemize}
 \item[(i)]  in case $s=1,2$, there exists a positive constant $C_1$
independent of $h,u$ and $u^R$ such that
\begin{equation}\label{sample} \norm u-u^R\norm_{L^s(\Omega)}\geq C_1h^{p+1}\left(
\left(\sum_{i=1}^{N^\circ_e}h^n\norm
D_i\norm^s\right)^{1/s}-|u|_{W^{p+1}_s(\Omega)}\right),n=2,
\end{equation}

\item[(ii)] in case $s=\infty$, there exists a constant
$C_\infty>0$, independent of $h,u$ and $u^R$, such that
\begin{equation}\notag
 \norm u-u^R\norm_{L^\infty(\Omega)}\geq C_\infty
h^{p+1}\left[\max_{1\leq i\leq N^\circ_e} \norm D_i\norm
-|u|_{W^{p+1}_\infty(\Omega)}\right],
\end{equation}
\end{itemize}
where the components of $D_i$ are
\[ D_i^{\alpha}=
J_i^{(\alpha)}/(h^{p+1-|\alpha|}),\quad
J^{(\alpha)}_i=\llbracket\partial^\alpha u^R\rrbracket_{x_i},\quad   |\alpha|=k,\,\, 0\leq k\leq p.\]

Suppose assumptions H0 and A1 of Theorem \ref{thm3D} hold for $n=3$. Then assertions (i) and (ii) hold. Now $x_i$ is a
face center and $N_e^\circ$ replaced by $N_f^\circ$.
\end{theorem}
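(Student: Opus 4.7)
The plan is to follow the proofs of Theorems \ref{thm2.3}, \ref{thm2.4}, \ref{thm2.5} and their three dimensional analogues based on Theorem \ref{thm3D}, and to convert the $h_{\min}$-normalized quantities $\tilde D_i$, $h_{\min}^n$ and $(h_{\min}/h)^{n/2}$ appearing there into the $h$-normalized quantities $D_i$, $h^n$ and $1$ that the present theorem requires. Quasi-uniformity is exactly what permits this conversion; without it the previous theorems are the strongest statements one can make.

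First I would record the algebraic link between the two jump normalizations. Comparing the definitions in (\ref{jump2}), (\ref{jump22}) with that of $D_i$ in the present theorem, one has componentwise $\tilde D_i^{\alpha}=D_i^{\alpha}(h_{\min}/h)^{|\alpha|}$. Quasi-uniformity furnishes a constant $\sigma\geq 1$, independent of $h$, with $h\leq \sigma\,h_{\min}$; hence $(h_{\min}/h)^{|\alpha|}\geq \sigma^{-p}$ for every $|\alpha|\leq p$, which delivers the spectral-norm inequality $\norm\tilde D_i\norm\geq \sigma^{-p}\norm D_i\norm$, together with $h_{\min}^n\geq \sigma^{-n}h^n$ and $(h_{\min}/h)^n\geq \sigma^{-n}$.

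Next I would insert these conversions in the local estimates appearing inside the proofs of the previous theorems. For $s=2$, the basic local estimate (\ref{2.4}) (and its 3D counterpart from the proof of Theorem \ref{thm3D}) gives
\[\norm u^I-u^R\norm^2_{L^2(\Omega_i^*)}\geq C_p\, h_{\min}^n h^{2p+2}\norm\tilde D_i\norm^2\geq C_p\sigma^{-(n+2p)}\, h^n h^{2p+2}\norm D_i\norm^2.\]
Summing over $i$, taking the square root, and invoking the triangle inequality together with $\norm u-u^I\norm_{L^2(\Omega)}\leq Ch^{p+1}|u|_{H^{p+1}(\Omega)}$ produces (\ref{sample}) for $s=2$. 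The $s=1$ case proceeds identically once one inserts the scaling step $\norm u^I-u^R\norm_{L^1(\Omega_i^*)}\geq c\, h_{\min}^{n/2}\norm u^I-u^R\norm_{L^2(\Omega_i^*)}$ used in the proof of Theorem \ref{thm2.4} and converts $h_{\min}^{n/2}\norm\tilde D_i\norm$ into $h^{n/2}\norm D_i\norm$. For $s=\infty$ the proof of Theorem \ref{thm2.5} yields $U_i^2\geq C(h_{\min}/h)^n h^{2p+2}\norm\tilde D_i\norm^2$, and quasi-uniformity replaces the prefactor $(h_{\min}/h)^n$ by a universal positive constant; taking the maximum over $i$ and applying the triangle inequality completes the $L^\infty$ claim. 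The 3D assertions are handled identically, with Theorem \ref{thm3D} supplying the $L^2$ local estimate and the 3D portions of Theorems \ref{thm2.4}, \ref{thm2.5} supplying the $L^1$ and $L^\infty$ analogues.

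I do not foresee a genuine obstacle: the theorem is essentially a packaging of the earlier local estimates with the two bounds $h\leq \sigma\,h_{\min}$ and $\norm\tilde D_i\norm\geq \sigma^{-p}\norm D_i\norm$ supplied by quasi-uniformity. The only point requiring attention is that the conversion between the two jump normalizations be carried out at the level of the local estimate on the disk $\Omega_i^*$, that is, \emph{before} the sum over $i$ and the triangle inequality with the interpolation error are invoked; doing so keeps the resulting $\sigma$-dependence inside the single multiplicative constant $C_1$ (or $C_\infty$) and does not contaminate the $W^{p+1}_s$-seminorm term.
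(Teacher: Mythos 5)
Your proposal is correct and follows essentially the same route as the paper: the paper's own proof is a one-line observation that $\tilde D_i^{\alpha}=D_i^{\alpha}(h_{\min}/h)^{k}$ and that quasi-uniformity lets one replace every occurrence of $h_{\min}$ by $Ch$ in Theorems \ref{thm2.3}, \ref{thm2.4}, \ref{thm2.5} and \ref{thm3D}, which is exactly the conversion you carry out (only in more detail, at the level of the local estimates). Your closing caution about where to insert the conversion is harmless but not essential, since the same substitution applied directly to the already-summed conclusions of the earlier theorems gives the result just as well.
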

\begin{proof}
Note that since $\tilde D_i^{\alpha}= D_i^{\alpha}(\frac {h_{\min}}{h})^k$,
$ \norm{\tilde D}_i\norm^2=
 \sum_{k=0}^p (D^{\alpha}_i)^2 (\frac {h_{\min}}{h})^{2k}.
$ By quasi-uniformness,  $h/h_{\min}$ is uniformly bounded above and
we can replace all occurrences of $h_{\min}$ by $Ch$ in all the
previous theorems. This completes the proof.
\end{proof}
\section{Optimal order convergence implies numerical smoothness}
We present this section independently from other sections so that the reader can read it directly. There might be some
repetitions of the already introduced notations.

\begin{definition}
{\bf Type A Numerical Smoothness.}\label{smooth_def}
Let $u_h\in \ps_p^h$ be a piecewise polynomial of degree $\leq p$ with respect
to a family of subdivisions $\{{\mathcal T}_h\}$ on $\Omega\subset {\mathbb
R}^n$ by $n$-simplices and the alikes
(quadrilateral, hexahedrons etc.). Let $\{x_i\}_{i=1}^{N^\circ}$ be the set of all midpoints of
interior edges for $n=2$ and barycenters of interior faces for $n=3$. Then
$u_h$ is said to be $W^{p+1}_s(\Omega)$-smooth of Type A, $s\geq 1$, if there is
a constant $C_s$, independent of $h$ and $u_h$, such that
\begin{equation}\label{0}
\sum_{i=1}^{N^\circ}h^n\norm D_i\norm^s\leq C_s,
\end{equation}
and $W^{p+1}_\infty(\Omega)$-smooth, if there exists a constant
$C_\infty$ independent of $h$ and $u_h$ such that
\begin{equation}\label{1}
\max_{1\leq i\leq N^\circ} \norm D_i\norm\leq C_\infty,
\end{equation}
 where the components of $D_i$
are the scaled jumps of partial derivatives
\[ D_i^{\alpha}=
J_i^{(\alpha)}/(h^{p+1-|\alpha|}),\quad
J^{(\alpha)}_i=\llbracket\partial^\alpha u_h\rrbracket_{x_i},\quad
\,\, |\alpha|=k,\,\, 0\leq k\leq p.\]
\end{definition}
The A in Type A stands for (a)cross the interface as opposed to the Type I (interior) smoothness below.
As pointed out before, Definition \ref{smooth_def} of numerical smoothness was
first introduced in \cite{SUN} for $n=1$.  It is also worth noting that for the $n=1$ case, several natural conditions for optimal convergence are
already included. These include that the scaled functional value $|D_i^0|\leq C$ for all $i$ in the case of $k=0$, and at the other end in the case of $k=p$ that $|D_i^p|\leq C$ or (\ref{0}) with $s=1$  implies the piecewise constant function $\frac{d^pu_h}{dx^p}$ has bounded variation.

  Intuitively, the smoothness of a numerical solution $u_h\in \ps_p^h$
 should be measured by the boundedness of partial derivatives $\partial^\alpha u_h$. On an element $T\in {\mathcal T}_h$,
 by Taylor expansion around any point $x_{m}$ in $T$, e.g., the center of $T$ or a point on the boundary of $T$ using one-sided
  derivatives, we see that  the quantities $\partial^\alpha u_h(x_{m})$ would be sufficient to
 give information on the interior smoothness. In other words, for this part of smoothness we need a constant $M>0$, independent of $h$, such that
\begin{equation}\label{inner}
  M^\alpha_{m}:=\left\vert\partial^\alpha u_h(x_{m})\right\vert\leq M,\quad  \forall\,\, |\alpha|=k,\,\, 0\leq k\leq p.
   \end{equation}
   On the other hand, the smoothness across the interface boundary of an element, by common sense, should be measured by the jumps of partials $J^{(\alpha)}_i$. The crucial part of Definition
   \ref{smooth_def} is to point out that this intuition needs to be adjusted and that the quantities   $D_i^{\alpha}$ are what is needed to correctly measure numerical
   smoothness across the interface. Notice that the definition does not refer to any convergence to a target solution $u$. In an attempt to
   give a corresponding numerical smoothness of Type I (I for interior) we replace the $D_i$ by $F_i$, which is the difference in the derivatives of $u_h$ and $u$   at $x_m$.
   \begin{definition}
{\bf Type I Numerical Smoothness.}\label{smooth_def1}
Let $u\in C^{p+1}(\Omega),\Omega\subset {\mathbb R}^n$ and let $u_h\in \ps_p^h$ be a piecewise polynomial of degree $\leq p$ with respect
to a family of subdivisions $\{{\mathcal T}_h\}$ of $\Omega$. Let $\{x_i\}_{i=1}^{N^{\mathcal T}}$ be a collection of points $x_i\in T_i\in {\mathcal T}_h,
1\leq i\leq N^{\mathcal T},$ where $N^{\mathcal T}$ is the cardinality of ${\mathcal T}_h$. Then
$u_h$ is said to be $W^{p+1}_s(\Omega)$-smooth of Type I, $s\geq 1$, if there is
a constant $C_s$, independent of $h$ and $u_h$, such that
\begin{equation}
\sum_{i=1}^{N^{\mathcal T}}h^n\norm F_i\norm^s\leq C_s,
\end{equation}
and $W^{p+1}_\infty(\Omega)$-smooth of Type I, if there exists a constant
$C_\infty$ independent of $h$ and $u_h$ such that
\begin{equation}
\max_{1\leq i\leq N^{\mathcal T}} \norm F_i\norm\leq C_\infty,
\end{equation}
 where the components of $F_i$
are the scaled differences between partial derivatives
\[ F_i^{\alpha}=
\partial^\alpha(u-u_h)(x_i)/(h^{p+1-|\alpha|}),\quad
\,\, |\alpha|=k,\,\, 0\leq k\leq p.\]
\end{definition}

   Now in creating a sound smoothness indicator that can account for interior and boundary smoothness, ideally one must incorporate
    the $F^{\alpha}$ (more practically, the computable $M^\alpha=\partial^\alpha u_h$) and $D^\alpha$ quantities. In other words, in an adaptive algorithm, a computable bound ideally should include all or some of them in a proper expression, so long as the cost effect is not too much of a concern. On the other hand, our next theorem concerns a necessary condition for convergence. So we only use the $D^\alpha$ quantities to describe it. The one using $F^\alpha$ will come after that.
   In this perspective we can say that these theorems put the statement
 ``a numerical approximate solution ought to be as
smooth as its targeted exact solution.'' on a rigorous footing.

\begin{theorem}\label{smoothness}
Suppose that $u\in W^{p+1}_s(\Omega),s=1,2,\infty,\Omega\subset
{\mathbb R}^n$ and that $u^R$ is in $\ps_p^h$ on a quasi-uniform family of meshes on $\Omega$, be it made of triangles or
quadrilaterals ($n=2$) or tetrahedrons or hexahedrons ($n=3$). Then a necessary condition for
\[  \norm u-u^R\norm_{L^s(\Omega)}\leq{\mathcal O}(h^{p+1})\]
is for $u^R$ to be $W^{p+1}_s$ smooth. In particular, for
\[  \norm u-u^R\norm_{L^\infty(\Omega)}\leq{\mathcal O}(h^{p+1})\]
a necessary condition is that all jumps in the $k^{th}$ partial
derivatives at midpoints  $x_i$ (n=2) and  face centers $x_i$ ($n=3$)
satisfy
\[  \left\vert\llbracket\partial^\alpha
u^R\rrbracket_{x_i}\right \vert={\mathcal O}(h^{p+1-k}),\quad  |\alpha|=k,\,\, 0\leq k\leq p.\]
Here all smoothness refers to Type A smoothness.
\end{theorem}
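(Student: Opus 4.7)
The plan is to obtain Theorem \ref{smoothness} as a direct corollary of Theorem \ref{main2}, which already packages all of the heavy lifting: the covolume construction, the local $L^2$ projection on the dual mesh giving (\ref{covolume}), the uniform choice of the ball radius $\delta$, the change of variables to the master element, and the non-degeneracy of the quadratic form $Q$ from Lemma \ref{crucial}. Given the assumed optimal bound $\|u-u^R\|_{L^s(\Omega)} \leq C h^{p+1}$, I would simply sandwich it against the lower bound of Theorem \ref{main2} and solve for the Type~A smoothness functional of $u^R$.

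For $s\in\{1,2\}$, chaining the upper bound with Theorem \ref{main2}(i) gives
\begin{equation*}
C h^{p+1} \;\geq\; C_1 h^{p+1}\left( \left(\sum_{i=1}^{N_e^\circ} h^n \|D_i\|^s\right)^{1/s} - |u|_{W^{p+1}_s(\Omega)}\right),
\end{equation*}
so dividing through by $h^{p+1}$ and rearranging yields an $h$-independent bound on $\sum_i h^n \|D_i\|^s$, which is precisely (\ref{0}). For $s=\infty$, chaining with Theorem \ref{main2}(ii) produces
\begin{equation*}
\max_{1\leq i\leq N_e^\circ} \|D_i\| \;\leq\; C/C_\infty + |u|_{W^{p+1}_\infty(\Omega)},
\end{equation*}
which is (\ref{1}); expanding $D_i^\alpha = J_i^{(\alpha)}/h^{p+1-|\alpha|}$ recovers the claimed pointwise jump estimate $|\llbracket\partial^\alpha u^R\rrbracket_{x_i}| = \mathcal{O}(h^{p+1-|\alpha|})$. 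The three-dimensional case is handled identically after replacing $N_e^\circ$ by $N_f^\circ$ and interior-edge midpoints by interior-face barycenters, exactly as permitted by the last sentence of Theorem \ref{main2}.

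At this stage there is no genuine obstacle: the target function $u$ is fixed, so its $W^{p+1}_s$ seminorm is an $h$-independent constant that simply joins the constant $C/C_1$ (or $C/C_\infty$) in the final smoothness bound. The only point worth checking carefully is that quasi-uniformness has already been absorbed inside Theorem \ref{main2}, where the transition from $\tilde D_i$ (scaled by $h_{\min}$) to $D_i$ (scaled by $h$) was effected via $h/h_{\min} \leq \text{const}$. All of the delicate geometric work --- the uniform choice of radius in 2D and 3D under the minimal angle condition, the dihedral-angle estimates of Lemmas \ref{angle_relation} and \ref{angle_relation2}, and the dual-mesh approximation bound --- is inherited from Section~\ref{section2}, leaving this theorem essentially a one-line rearrangement of the main estimate.
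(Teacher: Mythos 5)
Your proposal is correct and is exactly the paper's own argument: the author likewise deduces Theorem \ref{smoothness} by inserting the assumed bound $\norm u-u^R\norm_{L^s(\Omega)}\leq Ch^{p+1}$ into inequality (\ref{sample}) of Theorem \ref{main2} (and its $s=\infty$ and $n=3$ variants) and rearranging, with $|u|_{W^{p+1}_s(\Omega)}$ absorbed as an $h$-independent constant. Nothing further is needed.
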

\begin{proof} Suppose  $\norm u-u^R\norm_{L^s(\Omega)}\leq Ch^{p+1+\sigma},\sigma\ge 0$. Applying this to inequality (\ref{sample}) deduces the result. Other assertions follow in a similar way.
\end{proof}

Note that all $D_i^\alpha$ need to be bounded for convergence as a consequence of this theorem.

\begin{theorem}\label{smoothness1}
Suppose that $u\in C^{p+1}(\Omega),s=1,2,\infty,\Omega\subset
{\mathbb R}^n, 1\leq n\leq 3$ and that $u^R$ is in $\ps_p^h$ on a quasi-uniform family $\{{\mathcal T}_h\}$ of meshes on $\Omega$, be it made of triangles or
quadrilaterals ($n=2$) or tetrahedrons or hexahedrons ($n=3$). Then a necessary condition for
\[  \norm u-u^R\norm_{L^s(\Omega)}={\mathcal O}(h^{p+1})\]
is for $u^R$ to be $W^{p+1}_s$ smooth. In particular, for
\[  \norm u-u^R\norm_{L^\infty(\Omega)}={\mathcal O}(h^{p+1})\]
a necessary condition is that all the $k^{th}$ partial
derivatives $x_i\in T$ satisfy
\begin{equation}\label{bdd}  |\partial^\alpha u^R(x_i)|={\mathcal O}(1),\quad  |\alpha|=k,\,\, 0\leq k\leq p.
\end{equation}
Here all smoothness refers to Type I smoothness and $\{x_i\}$ is any collection of points, one from each element.
\end{theorem}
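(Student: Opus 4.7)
The plan is to reduce the bound on $F_i^\alpha$ to a purely local, element-wise argument that combines Taylor's theorem with a finite-dimensional polynomial inverse inequality; unlike Theorem~\ref{smoothness}, no dual mesh, interface construction, or appeal to Lemma~\ref{crucial} is needed. First, fix an element $T_i\in{\mathcal T}_h$ together with its evaluation point $x_i\in T_i$. Since $u\in C^{p+1}(\Omega)$, I would introduce the degree-$p$ Taylor polynomial $u^{T_i}$ of $u$ centered at $x_i$. By construction $\partial^\alpha u^{T_i}(x_i)=\partial^\alpha u(x_i)$ for every $|\alpha|\le p$, and the standard Taylor remainder gives $\|u-u^{T_i}\|_{L^s(T_i)}\le C\,h^{p+1+n/s}\|u\|_{C^{p+1}(\bar\Omega)}$, with the obvious modification when $s=\infty$.

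Next I would exploit the fact that $q:=u^{T_i}-u^R|_{T_i}\in{\mathbb P}_p(T_i)$. Pulling $T_i$ back to a reference element $\hat T$ and using equivalence of norms on the finite-dimensional space ${\mathbb P}_p(\hat T)$ yields a pointwise-to-$L^s$ inverse estimate
\[|\partial^\alpha q(x_i)|\le C\,h^{-|\alpha|-n/s}\,\|q\|_{L^s(T_i)},\]
whose constant depends only on $p$, $n$, $s$, and the shape-regularity constant, hence is uniform over the quasi-uniform family. Because $\partial^\alpha q(x_i)=\partial^\alpha(u-u^R)(x_i)$ for $|\alpha|\le p$, the triangle inequality applied to $q=(u^{T_i}-u)+(u-u^R)$ gives
\[|\partial^\alpha(u-u^R)(x_i)|\le C\,h^{-|\alpha|-n/s}\bigl(\|u-u^{T_i}\|_{L^s(T_i)}+\|u-u^R\|_{L^s(T_i)}\bigr).\]

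Dividing by $h^{p+1-|\alpha|}$ recasts the left side as $|F_i^\alpha|$. The Taylor contribution produces a uniform constant. For finite $s$, raising to the $s$-th power, multiplying by $h^n$, and summing over the $\mathcal O(h^{-n})$ elements collapses the other piece into $h^{-s(p+1)}\|u-u^R\|_{L^s(\Omega)}^s$, which is $\mathcal O(1)$ by the hypothesized optimal convergence; this yields $\sum_i h^n\|F_i\|^s\le C$, i.e., Type~I $W^{p+1}_s$ smoothness. For $s=\infty$ the same inequality applied elementwise, together with $\|u-u^R\|_{L^\infty(T_i)}\le\|u-u^R\|_{L^\infty(\Omega)}=\mathcal O(h^{p+1})$, gives $\max_i\|F_i\|\le C$; the pointwise bound (\ref{bdd}) then follows immediately because $u\in C^{p+1}(\Omega)$ forces $|\partial^\alpha u|$ to be bounded on $\bar\Omega$ for $|\alpha|\le p$.

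The only delicate point is making sure the polynomial inverse inequality has a constant genuinely independent of $T_i$ and $h$. Under shape regularity this is routine for simplicial meshes via pullback to a single reference simplex; for quadrilateral and hexahedral meshes the admissible reference configurations form a compact family (compare the role of regularity in the proof of Theorem~\ref{thm2.3}), so a compactness argument still supplies a uniform constant. No further obstacle is anticipated.
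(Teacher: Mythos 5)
Your proof is correct, and it follows the same basic skeleton as the paper's argument --- insert an intermediate polynomial on each element, control the derivative discrepancy at $x_i$ by a pointwise-to-$L^s$ inverse inequality on ${\mathbb P}_p$, and split the resulting $L^s$ norm by the triangle inequality into an approximation term and the term $\|u-u^R\|_{L^s(T_i)}$ that the optimal-convergence hypothesis controls --- but with one genuine difference in the choice of intermediate polynomial. The paper uses the Lagrange nodal interpolant $u^I$ and therefore needs two separate interpolation estimates (a pointwise bound on $|u_I^{(k)}(x_m)-u^{(k)}(x_m)|$ of order $h^{p+1-k}$ for the term $I_2$, plus an $L^s$ bound on $\|u^I-u\|_{L^s(T_i)}$ inside the triangle inequality), and it only writes out the one-dimensional case, asserting that higher dimensions are ``no essential difference.'' Your Taylor polynomial $u^{T_i}$ centered at $x_i$ makes the analogue of $I_2$ vanish identically, since $\partial^\alpha u^{T_i}(x_i)=\partial^\alpha u(x_i)$ for $|\alpha|\le p$, leaving only the single $L^s$ remainder bound $\|u-u^{T_i}\|_{L^s(T_i)}\le Ch^{p+1+n/s}$; this is available uniformly for all the element shapes in the theorem (they are convex, hence star-shaped with respect to $x_i$) and sidesteps the awkwardness of defining a ${\mathbb P}_p$ Lagrange interpolant on quadrilaterals and hexahedra. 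So your version actually makes the higher-dimensional case more transparent than the paper's sketch. Your bookkeeping is right: the Taylor contribution to $|F_i^\alpha|$ is $O(1)$ per element and sums to $O(1)$ against the weight $h^n$ over the $O(h^{-n})$ elements, while the error contribution collapses to $h^{-s(p+1)}\|u-u^R\|_{L^s(\Omega)}^s=O(1)$; and your closing remark on the uniformity of the inverse-inequality constant under shape regularity (via a reference element or, for non-simplicial elements, via inscribed and circumscribed balls of radius comparable to $h$) addresses the only point that needed care.
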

\begin{proof}
Since there is no essential difference between the proof for 1-D and those for higher dimensions, we will just give a 1-D version.
Let $\mathcal T_h=\cup_{i=0}^N [x_i,x_{i+1}]$ be a quasi-uniform subdivision on $\Omega=(a,b)$, and let $u\in W^{p+1}_\infty(\Omega)$ and $u^I\in C(\Omega)\cap {\mathbb P}_p^h$ such that $u^I$ restricted to $T_i=(x_i,x_{i+1})$
is the Lagrange nodal interpolant of degree $\leq p$. Let $u^R\in {\mathbb P}_p^h$ be given and
to simplify the presentation, $u_R^{(k)}=\frac{d^k}{dx^k}u^R$, $ u_I^{(k)}=\frac{d^k}{dx^k}u^I$, and $u^{(k)}=\frac{d^k}{dx^k}u$.
At a typical point $x_m\in T_i$, we have the difference in derivatives
\begin{eqnarray}
|\tilde F_i^{(k)}|&:=&|u_R^{(k)}(x_m)-u^{(k)}(x_m)|\notag\\
&\leq& |u_R^{(k)}(x_m)-u_I^{(k)}(x_m)|+|u_I^{(k)}(x_m)-u^{(k)}(x_m)|\notag\\
&=&I_1+I_2.\notag
\end{eqnarray}
On the one hand
\begin{equation}\label{M11}
I_2\leq Ch^{p+1-k}|u|_{W^{{p+1}}_\infty(T_i)},
\end{equation}
and on the other hand
\begin{equation}\label{M22}
I_1\leq Ch^{-k}\norm u^R-u^I\norm_{L^\infty(T_i)},
\end{equation}
where we have used quasi-uniformness of the mesh.
In addition
\begin{eqnarray*}
\norm u^R-u^I\norm_{L^\infty(T_i)}\leq \norm u^R-u\norm_{L^\infty(T_i)}+\norm u^I-u\norm_{L^\infty(T_i)}.
\end{eqnarray*}

Combining all the related estimates, we have

\begin{equation}\label{common1} |\tilde F_i^{(k)}|\leq Ch^{-k}\left(h^{p+1}|u|_{W^{p+1}_\infty(\Omega)}
+\norm u_R-u\norm_{L^\infty(\Omega)}\right),
\end{equation}
which stated in a more practical manner is (\ref{bdd}).

As for the $W^{p+1}_s,s=1,2$ smoothness estimates, we proceed as before, but now
\[   I_2\leq Ch^{p+1-k}|u|_{W^{p+1}_s(T_i)}.\]
Using a standard scaling argument on (\ref{M22}), we have
\[ I_1\leq Ch^{-k}h^{-\frac 1 s}\norm u^R-u^I\norm_{L^s(T_i)}.\]
and
\begin{eqnarray*}
\norm u^R-u^I\norm_{L^s(T_i)}\leq \norm u^R-u\norm_{L^s(T_i)}+\norm u^I-u\norm_{L^s(T_i)},
\end{eqnarray*}
Thus
\begin{eqnarray}
 |\tilde F_i^k|&\leq& Ch^{-k} h^{p+1}|u|_{W^{p+1}_s(T_i)}\notag\\
 &&+Ch^{-k}h^{-\frac 1 s}
\left(\norm u^R-u\norm_{L^s(T_i)}+\norm u^I-u\norm_{L^s(T_i)}\right).
\end{eqnarray}

Combining all the related estimates, we have

\begin{eqnarray*} |\tilde F_i^k|&\leq Ch^{-k}\left(h^{p+1}(1+h^{-\frac 1 s})|u|_{W^{p+1}_s(T_i)}
+h^{-\frac 1 s}\norm u^R-u\norm_{L^s(T_i)}\right).\\
\end{eqnarray*}
Hence
\[|F_i^k|\leq C\left(1+h^{-1/s}|u|_{W^{p+1}_s(T_i)}+h^{-(p+1+1/s)}\norm u^R-u\norm_{L^s(T_i)}\right).
\]
Summing appropriately completes the completes the proof.
\end{proof}
\begin{remark} An immediate consequence of this theorem is that if a single $D_i^\alpha$ is bigger than ${\mathcal O}(1/h)$ , then
$\norm u-u^R\norm_{L^2}$ must be bigger than ${\mathcal O}(h^{p+1})$. Note also that the boundedness of the computable $M^{\alpha}=\partial^\alpha u^R(x_i)$ is necessary. Again it advocates
that $M^\alpha$ should be incorporated one way or another in a computable bound of an adaptive algorithm. The success of using
smoothness indicator of the type $S=(D, M)$ can be found in \cite{SUN2, SR}. It insisted in the control that $D$ and $M$ must be bounded and our two theorems justify that approach.
\end{remark}

Several concluding remarks concerning future extensions are in order here. Our results here pertain to a coordinate free
approach in the sense that our approximating functions $u_h$ are polynomial on each element. The case of the pullbacks being polynomials works as well,
but it is more subtle to handle due to some complications including some tensor product  polynomial approximation issues on quadrilateral meshes ( Arnold {\it et al.} \cite{ABF}). We will report it in a separate paper.

\section{Appendix: $W^{p+1}_s(\Omega)$ smoothness estimates for 1-D}

In this section, we give a short proof of  Theorem \ref{smoothness} for $n=1$. The $s=\infty$ case was communicated to me by T. Sun \cite{SUN3}.

\subsection{Limitation of a short proof in 1-D}
\begin{proof}
Let $\{\mathcal T\}_h=\cup_{i=0}^N [x_i,x_{i+1}]$ be a quasi-uniform subdivision on $\Omega=(a,b)$, and let $u\in W^{p+1}_\infty(\Omega)$ and $u^I\in C(\Omega)\cap {\mathbb P}_p^h$ such that $u^I$ restricted to $(x_i,x_{i+1})$
is the Lagrange nodal interpolant of degree $\leq p$. Let $u^R\in {\mathbb P}_p^h$ be given and
to simplify the presentation, first let $\Omega_i^+=(x_i,x_{i+1})$, $\Omega_i^-=(x_{i-1},x_i)$, $\Omega_i=(x_{i-1},x_{i+1})$, $u_R^{(k)}=\frac{d^k}{dx^k}u^R$
and $ u_I^{(k)}=\frac{d^k}{dx^k}u^I$.

Now
\begin{eqnarray}
|J_i^{(k)}|:=&|u_R^{(k)}(x_i^+)-u_R^{(k)}(x^-_i)|\notag\\
\leq& |u_R^{(k)}(x_i^+)-u^{(k)}(x_i)|+|u_R^{(k)}(x^-_i)-u^{(k)}(x_i)|\label{notd}\\
\leq& |u_R^{(k)}(x_i^+)-u_I^{(k)}(x_i^+)|+|u_I^{(k)}(x_i^+)-u^{(k)}(x_i)|\notag\\
&\,+|u^{(k)}_R(x^-_i)-u^{(k)}_I(x^-_i)|+|u_I^{(k)}(x^-_i)-u^{(k)}(x_i)|\notag\\
=&I_1+I_2+I_3+I_4.\notag
\end{eqnarray}
We have
\begin{equation}\label{M1}
I_2+I_4\leq Ch^{p+1-k}|u|_{W^{{p+1}}_\infty(\Omega)},
\end{equation}
and
\begin{equation}\label{M2}
I_1+I_3\leq Ch^{-k}\left(\norm u^R-u^I\norm_{L^\infty(\Omega_i^+)}+\norm u^R-u^I\norm _{L^\infty(\Omega_i^-)}\right),
\end{equation}
where we have used quasi-uniformness of the mesh.
In addition
\begin{eqnarray*}
\norm u^R-u^I\norm_{L^\infty(\Lambda)}\leq \norm u^R-u\norm_{L^\infty(\Lambda)}+\norm u^I-u\norm_{L^\infty(\Lambda)},\quad \Lambda=\Omega_i^{\pm}.
\end{eqnarray*}

Combining all the related estimates, we have

\begin{equation}\label{common} |J_i^{(k)}|\leq Ch^{-k}\left(h^{p+1}|u|_{W^{p+1}_\infty(\Omega)}
+\norm u_R-u\norm_{L^\infty(\Omega)}\right).
\end{equation}
This completes the proof.
\end{proof}

This proof works even for higher dimensions. In fact, recalling  $W^1_\infty(\Lambda)={\mathcal Lip}(\Lambda), convex\, \Lambda\subset {\mathbb R}^n,n\geq 1$ (\cite{BS}, p 33), we see that
$u\in W^{p+1}_\infty(\Omega)$ implies that the term $\partial ^\alpha u(x_i), |\alpha|=k,0\leq k\leq p$ in (\ref{notd}) makes sense for higher dimensions
and the proof carries over with proper adjustment on the choice of $u^I$..

It is easy to see similar results can be obtained for  $W^{p+1}_s$-smoothness estimates, $s=1,2$ in 1-D case. 

\begin{remark}\label{A.1}
 Unfortunately, this simple proof cannot be
generalized to higher dimensions for several reasons. First, since
we used point value $u^{(k)}(x_i)$ in (\ref{notd}), and by the
Sobolev imbedding theorem \cite{TJ} for a $W^{p+1}_s$ function this
would require $(p+1-k)s\geq n$ for $s=1$ and $(p+1-k)s> n$ for
$s=2$. That means for $s=1$ we must conclude that in general if
$p\geq k> (p+1-n)$ the proof cannot be carried over to higher
dimensions. For $s=2$, the proof cannot be used if $p\geq k\geq
(p+1-n/2)$. For $n=2,3$ this involves derivatives of order $p$ or
$p-1$. Second, the existence of $u^I$ depends on point values and in
high dimensions the mesh point $x_i$ is replaced by a center of an $n-1$
simplex. The local geometry is different and it is hard to find or awkward to describe such an
approximation $u^I$ based on interpolation on the dual mesh. This
problem is overcome by the use of covolumes in our main approach and
point value based interpolants are replaced by projection type
interpolants when necessary.
\end{remark}

\end{document}